\theoremstyle{plain}
\newtheorem{theorem}{Theorem}[section]
\newtheorem{lemma}[theorem]{Lemma}
\newtheorem{corollary}[theorem]{Corollary}
\newtheorem{proposition}[theorem]{Proposition}
\theoremstyle{definition}
\newtheorem{definition}[theorem]{Definition}
\newtheorem{example}{Example}
\theoremstyle{remark}
\newtheorem{remark}{Remark}
\newcommand{\M}{\mathcal{M}}
\begin{document}

\title[Integrable Magnetic billiards]
      {Algebraic non-integrability of magnetic billiards}

\date{April 2016}
\author{Misha Bialy and Andrey E. Mironov}
\address{M. Bialy, School of Mathematical Sciences, Tel Aviv
University, Israel} \email{bialy@post.tau.ac.il}
\address{A.E. Mironov, Sobolev Institute of Mathematics, Novosibirsk, Russia } \email{mironov@math.nsc.ru}
\thanks{M.B. was supported in part by Israel Science Foundation grant
162/15 and A.M. was supported by Supported by RSF grant 14-11-0044.}

\subjclass[2010]{37J40,37J35} \keywords{ Magnetic Billiards,
Polynomial Integrals, Birkhoff conjecture}

\begin{abstract} We consider billiard ball motion in
a convex domain of the Euclidean plane bounded by a piece-wise
smooth curve influenced by the constant magnetic field. We show that
if there exists a polynomial in velocities integral of the magnetic
billiard flow then every smooth piece $\gamma$ of the boundary must
be algebraic and either is a circle or satisfies very strong
restrictions. In particular in the case of ellipse it follows that
magnetic billiard is algebraically not integrable for all magnitudes
of the magnetic field. We conjecture that circle is the only
integrable magnetic billiard not only in the algebraic sense, but
for a broader meaning of integrability. We also introduce the model
of Outer magnetic billiards. As an application of our method we
prove analogous results on algebraically integrable Outer magnetic
billiards.
\end{abstract}

\maketitle

%%%%%%%%%%%%%%%%%%%%%%%%%%%%%%%%%%%%%%%%%%%%%%%%%%%%%%%%%%%%%%%%%%%%%%%%%%

%%%%%%%%%%%%%%%%%%%%%%%%%%%%%%%%%%%%%%%%%%%%%%%%%%%%%%%%%%%%%%%%%%%%%%%%%%

%%%%%%%%%%%%%%%%
\section{\bf Introduction and main results}

\subsection{Birkhoff magnetic billiards}In this paper we consider
magnetic billiard inside a convex domain $\Omega$ bounded by simple
piece-wise smooth closed curve. We consider the influence of the
magnetic field of constant magnitude $\beta>0$ on the billiard
motion, so that the particle moves inside $\Omega$ with unite speed
along Larmor circle of constant radius $r=\frac{1}{\beta}$ in a
counterclockwise direction. Hitting the boundary the particle is
reflected according to the law of geometric optics. We call such a
model -- Birkhoff magnetic billiard.

We shall assume that every smooth piece $\gamma$ of the boundary of
$\Omega$ satisfies
$$
\beta< \min_{\gamma} k,
$$
where $k$ is the curvature. In other words we assume that the
magnetic field is relatively week with respect to the curvature. It
is an exercise in differential geometry of curves that in this case
the boundary of the domain $\Omega$ is strictly convex with respect
to circles of radius $r=\frac{1}{\beta}.$ This  means in particular
that intersection of any circle of radius $r$ with the domain
consists of at most one arc. Moreover, under this assumption, if a
circle of radius $r$ oriented in the same direction as the boundary
is tangent to $\partial\Omega$ (with agreed orientation) then it
contains the domain $\Omega$ inside. Birkhoff magnetic billiards
were studied in many papers:  \cite{Berg}, \cite{B}, \cite{GB},
\cite{k}, \cite{kozlova}, \cite{BR}, \cite{T2}, \cite{Ta}.

 Motivation
for the present paper comes from \cite{BR} where computer evidence
of chaotic behavior of magnetic billiard inside ellipse is
demonstrated for all magnitude of magnetic fields. For $\beta$
positive, unlike the case $\beta=0$, the pictures show that the
billiard is not integrable. We examine this problem in algebraic
setting using the ideas from our recent papers on usual Birkhoff
billiards \cite{BM1}, \cite{BM2} extending previous results of
\cite{Bolotin} and \cite{Tab}.

It seems plausible that other approaches for integrability might be
applicable. For example, it is interesting to test meromorphic
integrals via variational equations along 2-periodic orbit (see Fig. 1)
in the spirit of \cite{andrey}.

\vskip30mm

\begin{picture}(170,100)(-100,-50)

\qbezier(40,50)(44,83)(95,87)
\qbezier(95,87)(146,83)(150,50)
\qbezier(150,50)(146,18)(95,14)
\qbezier(95,14)(44,18)(40,50)

\put(95,87){\vector(-3,0){1}}
\put(147,25){\shortstack{$\gamma$}}
%\put(108,62){\shortstack{$l_A$}}
\qbezier(40,50)(95,71)(150,50)
\qbezier(40,50)(95,30)(150,50)
\put(94,61){\vector(-3,0){1}}
\put(97,40){\vector(3,0){1}}

%\put(136,35){\vector(-1,3){16}}

%\qbezier(62,65)(69,65)(72,59)

%\put(63,69){\shortstack{$\alpha$}}
%\put(73,64){\shortstack{$\alpha$}}

%\put(60,55){\circle*{2}}
%\put(55,43){\shortstack{$O$}}
%\put(66,87){\line(4,0){89}}
%\put(66,87){\circle*{2}}
%\put(61,90){\shortstack{$A$}}
%\put(60,55){\line(1,5){6.5}}
%\put(60,55){\line(1,1){32}}
%\put(92,87){\circle*{2}}
%\put(88,90){\shortstack{$T$}}
%\put(60,55){\line(3,1){94}}
%\put(155,87){\circle*{2}}
%\put(150,90){\shortstack{$B$}}
\put(15,-5){\shortstack{Fig. 1 {\footnotesize 2-periodic orbit inside $\Omega$, $\partial\Omega=\gamma.$}}}

%\qbezier(80,100)(100,5)(120,60)
%\qbezier(120,60)(140,100)(160,-5)

%\put(105,25){\circle*{2}}
%\put(132,25){\circle*{2}}
%\put(102,10){\shortstack{$\alpha$}}
%\put(127,10){\shortstack{$\beta$}}

%\put(195,30){\shortstack{$u$}}
%\put(65,112){\shortstack{$\sigma(u)$}}
%\put(50,-35){\shortstack{Fig. 2}}
\end{picture}

\subsection{Polynomial Integrals}

We shall study the existence of Polynomial in velocities integral
for magnetic billiards.
\begin{definition}\label{def} Let $\Phi:T_1\Omega\rightarrow\mathbb{R}$
be a function on the unite tangent bundle, $\Phi=\sum_{k+l=0}^N
a_{kl}(x)v_1^kv_2^l$, which is a polynomial in components of $v$
 with
continuous coefficients up to the boundary, $a_{kl}\in
C(\overline\Omega).$ We call $\Phi$ a polynomial integral of the
magnetic billiard if the following conditions hold.

1. $\Phi$ is an integral of magnetic flow $g^t$ inside $\Omega$,
$$\Phi(g^t(x,v)))=\Phi(x,v);$$

2. $\Phi$ is preserved under the reflections at smooth points of the
boundary: For any smooth point $x\in\partial\Omega,$
$$\Phi(x,v)=\Phi(x,v-2<n,v>n),$$ for any $v\in T_x\Omega, |v|=1,$
here $(v,n)$ positive orthonormal basis.
\end{definition}
\begin{remark}It appears that the condition of
convexity with respect to the circles of radius $r$ which we
introduced above can be relaxed if one adds in the Definition
\ref{def} of the integral the following requirement: For any given
circle of radius $r$ intersecting the domain $\Omega$ in several
arcs, the integral $\Phi$ is required to have the same value on all
the arcs of the intersection with $\Omega$. Our method strongly
relies on this additional requirement and it is not clear to us if
it is really necessary.
\end{remark}

\begin{example}\label{example}Let $\gamma$ be a circle with the center at the
origin. Then the function which measures the distance to the origin
of the center of Larmor circle remains unchanged under the
reflections and hence is the integral of the billiard flow. So the
integral $h$ in this case has the form:
$$h(x,v)=x_1^2+x_2^2+\frac{2}{\beta}(v_1x_2-v_2x_1).$$
\end{example}

In fact, we are not aware of any other piece-wise smooth example of
integrable magnetic billiard. Similarly to Birkhoff' conjecture for
usual billiard we ask if the only integrable magnetic billiard is
circular. As usual the integrability can be understood in various
ways. In this paper we study Polynomial in velocities integrals for
magnetic billiards. Another approach of \emph{Total} integrability
was considered in \cite{B}.

%%%%%%%%%%%%%%%%
 %%%%%%%%%%%%%%%%%%%%%%%%%%%%%%%%%%%
\subsection{Phase Space of Magnetic billiard}
We shall use throughout this paper the following construction.
Denote by $J$ the standard complex structure on $\mathbb{R}^2$ and
introduce the mapping:
\begin{equation}\label{L}{\mathcal L}: T_1\Omega \rightarrow \mathbb{R}^2,\
 {\mathcal L}(x,v)=x+rJv,
\end{equation}
which assigns to every unite tangent vector $v\in T_x{\Omega}$ the
center of the corresponding Larmor circle. Varying unite vector $v$
in $T_x{\Omega}$, for a fixed point $x\in\Omega$,  the corresponding
Larmor centers form a circle of radius $r$ centered at $x$, and the
domain swept by all these circles, when $x$ runs over $\Omega$, we
shall denote by $\Omega_r$:
$$\Omega_r={\mathcal L}(\Omega).$$
Vice versa, one can prove that for any circle of radius $r$ lying in
$\Omega_r$ its center necessarily belongs to $\Omega$.

For Birkhoff magnetic billiard we always choose a counterclockwise
orientation of $\partial\Omega.$
 Moreover, for any smooth piece $\gamma$ of the boundary
$\partial\Omega$ we define two curves as follows. Fix an arc-length
parameter $s$ of positive orientation, we set:
\begin{equation}
\label{D} \gamma_{+r}(s)={\mathcal L}(\gamma(s), \tau(s));\
\gamma_{-r}(s)={\mathcal L}(\gamma(s), -\tau(s)),
\end{equation}
where $\tau(s)=\dot{\gamma}(s).$ It is easy to see that,
$\Omega_r\subset\mathbb{R}^2$ is a bounded domain in the plane
homeomorphic to the annulus and the curves $\gamma_{\pm r}$, called
parallel curves to $\gamma,$ lie on the boundary
$\partial{\Omega_r}.$ Here $\gamma_{-r}$ lies on the outer boundary
of the annulus, and $\gamma_{+r}$ lies on the inner boundary. Other
pieces of the boundary of $\partial{\Omega_r}$ are circular arcs of
radius $r$ with the centers at the corners of the boundary
$\partial\Omega,$ but they will not play any role for us in the
sequel.
\begin{remark}
The curves $\gamma_{\pm r}$ are also called equidistant curves, or
fronts, in Singularity theory, or offset curves in Computer Aided
Geometric Design see \cite{SS}, \cite{SeS}.
\end{remark}
One easily computes the curvature of the parallel curves to be
$$k_{+r}=\frac{k(\gamma)}{r\cdot k(\gamma)-1};\quad
k_{-r}=\frac{k(\gamma)}{r\cdot k(\gamma)+1}. $$ So the curvature of
the inner boundary $k_{+r}$ and the outer boundary $k_{-r}$ always
satisfy the bounds:
\begin{equation}\label{bounds}
k_{+r}>\beta,\quad 0<k_{-r}<\beta,
\end{equation}
showing in particular, that any circle of radius $r$ with the center
at $\gamma(s)$ on $\gamma$ is tangent to the outer boundary from
inside at $\gamma(s)-rJ\dot{\gamma}(s)$ and to the inner boundary
from outside at the point $\gamma(s)+rJ\dot{\gamma}(s).$ Moreover,
apart from these tangencies this circle remains entirely inside
$\Omega_r$ (see Fig. 2).

By the definition ${\mathcal L}$ has constant value on every Larmor circle so
the components of ${\mathcal L}$ are integrals of magnetic flow  $g^t$ inside
$\Omega$.

Moreover, we introduce the mapping $\M:\Omega_r\rightarrow\Omega_r$
by the following rule: Let $C_-,C_+$ are two Larmor circles centered
at $P_-, P_+$ respectively. We define
$$
\M(P_-)=P_+ \iff\ C_-\quad  \textrm{is\ transformed \ to}\quad C_+,
$$
after billiard reflection at the boundary $\partial\Omega.$

With this definition $\M:\Omega_r\rightarrow\Omega_r$ preserves the
standard symplectic form in the plane, and thus $\Omega_r$ naturally
becomes the phase space of magnetic billiard. We shall call $\M$
Magnetic billiard map. Notice that on the boundaries $\gamma_{\pm
r}$, map $\M$ acts identically, while on the connecting circles of
the boundary corresponding to the corners of $\Omega$, map $\M$ is
not defined.

\vskip30mm

\begin{picture}(170,100)(-100,-50)

\qbezier(50,50)(51,69)(70,70)
\qbezier(70,70)(89,69)(90,50)
\qbezier(90,50)(89,31)(70,30)
\qbezier(70,30)(51,31)(50,50)

\put(50,50){\vector(0,-3){1}}

\qbezier(40,50)(41,79)(70,80)
\qbezier(70,80)(99,79)(100,50)
\qbezier(100,50)(99,21)(70,20)
\qbezier(70,20)(41,21)(40,50)

\put(40,50){\vector(0,-3){1}}

\qbezier(0,50)(1,119)(70,120)
\qbezier(70,120)(139,119)(140,50)
\qbezier(140,50)(139,-19)(70,-20)
\qbezier(70,-20)(1,-19)(0,50)

\put(0,50){\vector(0,-3){1}}

\qbezier(20,30)(22,78)(70,80)
\qbezier(70,80)(118,78)(120,30)
\qbezier(120,30)(118,-18)(70,-20)
\qbezier(70,-20)(22,-18)(20,30)

\put(20,30){\vector(0,-3){1}}

\put(67,51){\shortstack{$\Omega$}}
\put(67,97){\shortstack{$\Omega_r$}}

\put(130,100){\shortstack{$\gamma_{-r}$}}
\put(33,23){\shortstack{$\gamma_{+r}$}}

\put(70,30){\circle*{2}}

\put(62,35){\shortstack{$\gamma(s)$}}

\put(-77,-35){\shortstack{Fig. 2 {\footnotesize Circle of radius $r$
centered at $\gamma(s)$ is tangent to $\partial \Omega_r=
\gamma_{+r}\cup\gamma_{-r}.$}}}

\end{picture}

Given a polynomial integral
$\Phi=\sum_{ k+l=0}^N
a_{kl}(x)v_1^kv_2^l$ of the magnetic billiard we define the function
$F:\Omega_r\rightarrow\Omega_r$ by the requirement:
\begin{equation}\label{F}
F\circ{\mathcal L}=\Phi.
\end{equation}
 This is a well defined construction since $\Phi$ is an
integral of the magnetic flow so has constant values on any Larmor
circle. Moreover, since $\Phi$ is invariant under the billiard flow,
it follows that $F$ is invariant under billiard map $\M$:
$$
F\circ\M=F.
$$

 In
coordinates the definition (\ref{F}) reads:
\begin{equation}\label{v} F(x_1-rv_2,x_2+rv_1)=\Phi(x,v)=\sum_{0
\leq k+l\leq N}a_{kl}(x)v_1^kv_2^l.
\end{equation}
Notice, that since $\Phi$ is a polynomial in $v$, then function $F$
satisfies the following  property: $F$ restricted to any circle of
radius $r$ lying in $\Omega_r$ is a trigonometric polynomial of
degree at most $N$. Next Theorem claims that in such a case $F$ is a
polynomial function:
\begin{theorem}\label{harmonics} Let $\Omega_r$ be a domain in
$\mathbb{R}^2$ which is the union of all circles of radius $r$ whose
centers run over a domain $\Omega$ (for example the whole
$\mathbb{R}^2$). Let $F:\Omega_r\rightarrow\mathbb{R}$ be a
continuous function on $\Omega_r$ such that $F$ being restricted to
any circle of radius $r$ of $\Omega_r$ is a trigonometric polynomial
of degree at most $N.$ It then follows that $F$ is a polynomial in
$x,y$ of degree at most $2N.$
\end{theorem}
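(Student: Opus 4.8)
The plan is to prove that every partial derivative of $F$ of order $2N+1$ vanishes on $\Omega_r$; since $\Omega_r$ is open and connected, being the continuous image of the connected set $\Omega\times S^1$ under $(w,\theta)\mapsto w+re^{\I\theta}$, this forces $F$ to agree with a single polynomial of degree at most $2N$. Before differentiating I would first reduce to the case $F\in C^\infty$. The defining property --- that the restriction to every circle of radius $r$ (equivalently, by the fact quoted above, every circle of radius $r$ centred in $\Omega$) is a trigonometric polynomial of degree $\le N$ --- is translation invariant and is preserved under averaging, because trigonometric polynomials of degree $\le N$ form a finite--dimensional vector space. Hence the mollifications $F_\varepsilon=F*\rho_\varepsilon$ satisfy the same hypothesis on a slightly smaller domain, are smooth, and converge to $F$ locally uniformly; since a locally uniform limit of polynomials of degree $\le 2N$ is again such a polynomial, it suffices to treat smooth $F$.

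Identify $\mathbb{R}^2\cong\mathbb{C}$ by $z=x+\I y$, write the centre as $w=a+\I b$, and use the Wirtinger operators $\partial_w=\tfrac12(\partial_a-\I\partial_b)$, $\partial_{\bar w}=\tfrac12(\partial_a+\I\partial_b)$. For $w\in\Omega$ set
$$c_m(w)=\frac{1}{2\pi}\int_0^{2\pi}F(w+re^{\I\theta})\,e^{-\I m\theta}\,d\theta ,$$
so that $F(w+re^{\I\theta})=\sum_m c_m(w)e^{\I m\theta}$ and the hypothesis becomes $c_m\equiv 0$ for $|m|>N$. The decisive observation is that $w\mapsto F(w+re^{\I\theta})$ differs from $z\mapsto F(z)$ only by a translation, so $(\partial_z F)(w+re^{\I\theta})=\partial_w\big(F(w+re^{\I\theta})\big)$, whence the restriction of $\partial_z F$ to the circle centred at $w$ has Fourier coefficients $\partial_w c_m$, and likewise $\partial_{\bar z}F$ produces $\partial_{\bar w}c_m$. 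Differentiating the identity $F(w+re^{\I\theta})=\sum_m c_m(w)e^{\I m\theta}$ in $\theta$, expressing $\partial_\theta$ through $\partial_z,\partial_{\bar z}$, and comparing Fourier coefficients gives the recursion
$$m\,c_m=r\big(\partial_w c_{m-1}-\partial_{\bar w}c_{m+1}\big),\qquad m\in\mathbb{Z}.$$

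From here the argument is a short induction. Taking $m=N+1$ yields $\partial_w c_N=0$, and a downward induction on $m$ using the recursion (each step lowers the index by one while raising the power of $\partial_w$ by one) gives $\partial_w^{\,N-m+1}c_m=0$ for every $m\le N$; the conjugate family $\partial_{\bar w}^{\,N+m+1}c_m=0$ for $m\ge -N$ follows from the reality relation $c_{-m}=\overline{c_m}$. Finally, for any $p+q=2N+1$ the restriction of $\partial_z^{\,p}\partial_{\bar z}^{\,q}F$ to the circle centred at $w$ has Fourier coefficients $\partial_w^{\,p}\partial_{\bar w}^{\,q}c_m$, and for each $m$ with $|m|\le N$ a pigeonhole on $p+q$ forces $p\ge N-m+1$ or $q\ge N+m+1$, so that coefficient vanishes (and it vanishes trivially for $|m|>N$). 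Thus $\partial_z^{\,p}\partial_{\bar z}^{\,q}F$ restricts to the zero trigonometric polynomial on every circle of radius $r$ centred in $\Omega$, hence vanishes on all of $\Omega_r$, and the theorem follows. I expect the main obstacle to be organisational rather than conceptual: getting the intertwining of $\partial_\theta$ with $\partial_w,\partial_{\bar w}$ (the signs and index shifts) exactly right in the recursion, and carefully justifying the reduction to smooth $F$; once the recursion is established, the induction and the concluding pigeonhole are routine.
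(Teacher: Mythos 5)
Your argument is correct, and it takes a genuinely different route from the paper's. The paper proves the result via Lemma \ref{lemma} by induction on $N$: it subtracts the degree-$N$ polynomial matching $F$ on a core circle, factors out $x^2+y^2-r^2$ by Hadamard's lemma, and shows the quotient has the degree-$(N-1)$ property by deriving a three-term linear recurrence $rg_{k+1}+ag_k+rg_{k-1}=0$ for the high Fourier modes, whose characteristic roots lie on the unit circle, so any nonzero solution would violate the decay of Fourier coefficients of a continuous function; this is done on annuli which are then patched to cover $\Omega_r$. You instead work globally and directly: after the same mollification reduction (which the paper also performs, for the same reason --- finite dimensionality of the space of trigonometric polynomials of degree $\le N$ and of polynomials of degree $\le 2N$), you encode the hypothesis as $c_m\equiv 0$ for $\abs{m}>N$, derive the intertwining recursion $m\,c_m=r(\partial_w c_{m-1}-\partial_{\bar w}c_{m+1})$ (which checks out, including signs and index shifts), and propagate the vanishing downward to get $\partial_w^{N-m+1}c_m=0$ for $m\le N$ together with its conjugate, whence every Wirtinger derivative of order $2N+1$ of $F$ has all Fourier coefficients zero on every circle and so vanishes on the connected open set $\Omega_r$. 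Your version avoids the induction on $N$, the Hadamard factorization, and the annulus patching, and yields slightly finer information (a bidegree bound in $z,\bar z$ via the separate $\partial_w$ and $\partial_{\bar w}$ vanishing orders for each $c_m$); the paper's version is more elementary, resting only on one-variable Fourier series and a constant-coefficient difference equation. The only points to write out carefully in a full version are the ones you already flag: the exhaustion of $\Omega$ by slightly shrunken subdomains in the mollification step, and the downward induction for $\partial_w^{N-m+1}c_m=0$, which needs the hypothesis at both indices $m+1$ and $m+2$ (a strong induction, harmless since $c_{N+1}=c_{N+2}=0$ start it off).
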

Proof of this theorem uses Lemma \ref{lemma} whose elegant proof was
communicated to us by S.Tabachnikov. Proofs of the lemma and Theorem
\ref{harmonics} are given in Section \ref{lemma}. Notice that if one
allows $r$ to be arbitrary in Theorem \ref{harmonics} then the fact
would be obvious, however the result holds true when $r$ is fixed.

The next Corollary immediately follows from the Theorem and the
relation (\ref{v}):
\begin{corollary}
 Coefficients of the integral $\Phi$ are polynomials in $x,y$ of
degree at most $2N-(k+l)$.
\end{corollary}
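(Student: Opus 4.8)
The plan is to obtain the corollary as a direct bookkeeping consequence of Theorem \ref{harmonics}, so that essentially all the work has already been done. By that theorem the function $F$ is a genuine polynomial in the plane coordinates of degree at most $2N$; write $F(X,Y)=\sum_{p+q\le 2N} c_{pq}\,X^p Y^q$ with constants $c_{pq}$. The defining relation (\ref{v}) expresses $\Phi$ as the substitution $\Phi(x,v)=F(x_1-rv_2,\,x_2+rv_1)$, and I would treat the right-hand side as a polynomial in the four variables $x_1,x_2,v_1,v_2$ and simply collect the coefficient of each monomial $v_1^k v_2^l$.

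Concretely, I would expand the two factors by the binomial theorem,
$$(x_1-rv_2)^p=\sum_{l=0}^{p}\binom{p}{l}(-r)^l x_1^{p-l}v_2^l,\qquad (x_2+rv_1)^q=\sum_{k=0}^{q}\binom{q}{k}r^{k}x_2^{q-k}v_1^{k},$$
multiply, and sum over $p,q$. Gathering the terms that carry $v_1^k v_2^l$ gives
$$a_{kl}(x)=\sum_{\substack{p+q\le 2N\\ p\ge l,\; q\ge k}} c_{pq}\binom{p}{l}\binom{q}{k}(-r)^l r^{k}\,x_1^{p-l}x_2^{q-k}.$$
Every monomial occurring here has $x$-degree $(p-l)+(q-k)=(p+q)-(k+l)\le 2N-(k+l)$, which is exactly the asserted bound; thus each $a_{kl}$ is a polynomial in $x,y$ of degree at most $2N-(k+l)$, and the corollary follows.

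The only point that I would flag, and it is a matter of care rather than a genuine obstacle, is the meaning of \emph{the} coefficients: on the unit tangent bundle the representation $\Phi=\sum a_{kl}v_1^k v_2^l$ is not unique, since one may add any multiple of $v_1^2+v_2^2-1$. I would therefore make explicit that the relevant coefficients are those furnished canonically by the polynomial extension $\Phi(x,v)=F(x+rJv)$ to all $v\in\mathbb{R}^2$ coming from the substitution above; these agree with the given integral on $T_1\Omega$ and are the coefficients to which the degree bound applies. With this understood the statement is immediate, the entire analytic content having been carried by Theorem \ref{harmonics}.
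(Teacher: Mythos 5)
Your proposal is correct and is essentially the paper's own argument: the paper offers no separate proof, stating only that the corollary ``immediately follows from the Theorem and the relation (\ref{v})'', and your binomial expansion of $F(x_1-rv_2,\,x_2+rv_1)$ is precisely the bookkeeping that remark leaves implicit. Your caveat about the non-uniqueness of the coefficients on $T_1\Omega$ (modulo $v_1^2+v_2^2-1$) is a reasonable clarification of what ``the coefficients'' must mean, and does not change the substance.
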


Recall that the coefficients of integral $\Phi$ are assumed to be
continuous on the closure of $\Omega$, therefore it follows from the
very construction of the function $F$ that it extends continuously
to the boundary $\partial{\Omega_r}$ and hence by Theorem
\ref{harmonics}, $F$ coincides with a polynomial on
$\overline{\Omega}_r$.

Moreover, we will prove the following:
\begin{proposition}\label{prop}
Suppose that the magnetic billiard in $\Omega$ admits a polynomial
integral $\Phi$ and let $F$ be the corresponding polynomial on
$\overline{\Omega}_r$. Then for every smooth piece $\gamma$ of the
boundary $\partial\Omega$ it follows that
$$F|_{\gamma_{\pm r}}=const.$$
\end{proposition}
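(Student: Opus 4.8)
The plan is to show that the tangential derivative of $F$ along each curve $\gamma_{\pm r}$ vanishes identically, using the invariance $F\circ\M=F$ together with the fact, recorded above, that $\M$ fixes $\gamma_{\pm r}$ pointwise. By the Corollary and the preceding discussion, $F$ agrees with a polynomial on $\overline\Omega_r$, so $F$ is smooth up to the boundary and $\nabla F$ is continuous at every point of $\gamma_{\pm r}\subset\partial\Omega_r$. I would fix a base point $P_0=\gamma_{\pm r}(s_0)$ and approach it from the interior of $\Omega_r$, extracting a first-order consequence of the invariance in the limit.

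First I would record the elementary geometry of a single reflection. If $P\in\Omega_r$ is an interior center whose Larmor circle meets $\partial\Omega$ at a smooth point $x$ with incoming unit velocity $v_-$ and unit boundary normal $n$, then by (\ref{L}) the incoming and outgoing centers are $P=x+rJv_-$ and $\M(P)=x+rJv_+$, where $v_+=v_--2\langle n,v_-\rangle n$ is the reflected velocity. Subtracting gives
\begin{equation*}
\M(P)-P=rJ(v_+-v_-)=-2r\langle n,v_-\rangle\,Jn,
\end{equation*}
so the displacement $\M(P)-P$ is always parallel to $Jn$, hence parallel to the boundary tangent $\tau(x)$ at the reflection point. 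Moreover for an interior (non-grazing) center one has $\langle n,v_-\rangle\neq 0$, so $\M(P)\neq P$; the only grazing centers, where $\langle n,v_-\rangle=0$, are precisely the tangent Larmor circles sitting on $\gamma_{\pm r}$.

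Next I would pass to the limit $P\to P_0$ through interior points. As $P\to P_0=\gamma_{\pm r}(s_0)$ the reflection point $x=x(P)$ tends to $\gamma(s_0)$, so $\tau(x)\to\tau(s_0)$ and the unit vector $\big(\M(P)-P\big)/\big|\M(P)-P\big|$ tends to $\pm\tau(s_0)$. On the other hand, differentiating (\ref{D}) and using the Frenet relation $\dot\tau=kJ\tau$ gives $\dot\gamma_{\pm r}(s)=(1\mp rk)\tau(s)$, so the unit tangent to $\gamma_{\pm r}$ at $P_0$ is exactly $\pm\tau(s_0)$. Thus the chord $\M(P)-P$ becomes tangent to $\gamma_{\pm r}$ at $P_0$ in the limit. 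Now invariance gives $F(\M(P))-F(P)=0$ for every such $P$; since $F$ is $C^1$, Taylor's formula yields $0=\langle\nabla F(P),\M(P)-P\rangle+o(|\M(P)-P|)$. Dividing by $|\M(P)-P|\neq 0$ and letting $P\to P_0$, the continuity of $\nabla F$ and the direction computation above give $\langle\nabla F(P_0),\tau(s_0)\rangle=0$, i.e. $\frac{d}{ds}F(\gamma_{\pm r}(s))\big|_{s_0}=0$. As $s_0$ is arbitrary and each $\gamma_{\pm r}$ is connected, $F|_{\gamma_{\pm r}}$ is constant.

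The step I expect to require the most care is the limit in the last paragraph: one must verify that $P_0$ can be approached through interior points at which $\M$ is defined (away from the circular-arc pieces of $\partial\Omega_r$ coming from corners), that the reflection point $x(P)$ indeed converges to $\gamma(s_0)$, and that the normalized displacement has the claimed limiting direction without degenerating. Equivalently, one may phrase this as a twist statement: $\M$ fixes $\gamma_{\pm r}$ pointwise and its linearization there is a nontrivial shear along the curve, which forces $\nabla F$ to be conormal to $\gamma_{\pm r}$; the computation $\M(P)-P=-2r\langle n,v_-\rangle Jn$ is exactly what makes this shear nondegenerate.
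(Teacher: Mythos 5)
Your proof is correct and follows essentially the same route as the paper: both extract the first-order consequence of the reflection identity in the grazing limit, using that the displacement between incoming and outgoing Larmor centers is parallel to the boundary tangent $\tau$ at the impact point, which coincides with the tangent direction of $\gamma_{\pm r}$ at $P_0$. The only cosmetic difference is that the paper fixes the foot point $Q$ and differentiates $F(P_-(\epsilon))=F(P_+(\epsilon))$ in the reflection angle $\epsilon$ at $\epsilon=0$, whereas you let $P\to P_0$ arbitrarily and pass to the limit of normalized chords $(\M(P)-P)/|\M(P)-P|$, which requires (and you correctly flag) the extra check that the impact point converges.
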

\begin{remark}\label{r}
Given a smooth piece $\gamma$ of the boundary one can assume that
polynomial integral $F$ of $\M$ is such that the $constant$ in the
Proposition is $0,$ for both parallel curves $\gamma_{\pm r}$. Indeed if
$F|_{\gamma_{- r}}=c_1$ and $F|_{\gamma_{+ r}}=c_2,$ one can replace $F$
by $F^2-(c_1+c_2)F+c_1\cdot c_2$ to annihilate both constants
$c_1,c_2.$ In terms of the integral $\Phi$ this means that on $\gamma$
one can assume that:
$$\Phi(x,\pm\tau)=0,$$ for every point $x\in\gamma$ and $\tau$
a unite tangent vector to $\gamma$ at $x$.
\end{remark}
We shall prove Proposition \ref{prop} in Section 2

Proposition \ref{prop} and the Remark \ref{r} imply:
$$\gamma_{\pm r}\subset \{F=0\},$$
and thus $\gamma_{\pm r}$ is contained in the algebraic curve
$\{F=0\}$. This fact implies then that $\gamma$ itself is also
algebraic. We shall denote in the sequel the minimal defining
polynomial of the irreducible component in $\mathbb{C}^2$ containing
$\gamma_{\pm r}$ by $f_{\pm r}$ respectively. Since the curves
$\gamma_{\pm r}$ are real $f_{\pm r}$ are with real coefficients.
Notice, that it may happen that both $\gamma_{\pm r}$ belong to the
same component, so that $f_{+r}=f_{-r}.$ For instance this happens
for the case of parallel curves to $\gamma$ when $\gamma$ is ellipse
\cite {FN}, \cite{SS}, \cite{SeS}. In this case $f_{-r}=f_{+r}$ is
the irreducible polynomial of degree $8$.

%%%%%%%%%%%%%%%%%%%%%%%%%%%%%%%%%%%%%
\subsection{Main result and Corollaries}
We turn now to the formulation of our main result:
\begin{theorem}\label{main}
Let $\Omega$ be a convex bounded domain with a piece-wise smooth
boundary, such that every smooth piece of the boundary has curvature
at least, $\beta$. Suppose that the magnetic billiard in $\Omega$
admits a Polynomial integral $\Phi.$ Then the following alternative
holds: either $\partial\Omega$ is a circle, or every smooth piece
$\gamma$ of the boundary $\partial\Omega$ is not circular and has
the property that affine curves $\{f_{\pm r}=0\}$ are smooth in
$\mathbb{C}^2.$ Moreover, any non-singular point of intersection of
the projective curve $\{\tilde{f}_{\pm r}=0\}$ in $\mathbb{C}P^2$
with the infinite line $\{z=0\}$ away from isotropic points $(1:\pm
i:0)$ must be a tangency point with the infinite line. Here
$\tilde{f}_{\pm r}$ is a homogenization of $f_{\pm r}$.
\end{theorem}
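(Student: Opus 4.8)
The engine of the argument is a symmetry of $F$ along the circles of radius $r$ centered on $\gamma$. Fix $q=\gamma(s)$ and let $S_q$ be the circle of radius $r$ about $q$; writing the incoming unit velocity at $q$ as $v=\cos\phi\,\tau+\sin\phi\,n$, one checks that the Larmor centers of the incoming and reflected trajectories, $P_{\mp}=q+rJv_{\mp}$, are reflections of one another across the normal line to $\gamma$ at $q$. Denoting this reflection by $\sigma_q$ and using $F\circ\M=F$, we obtain $F\circ\sigma_q=F$ on all of $S_q$, the two fixed points of $\sigma_q|_{S_q}$ being exactly the tangency points $\gamma_{+r}(s)$ and $\gamma_{-r}(s)$. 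Since $\sigma_q$ is affine, $F(P)-F(\sigma_qP)$ is a polynomial of degree $\le 2N$ vanishing on the real irreducible circle $S_q$, so
\[
F(P)-F(\sigma_q P)=\bigl(\abs{P-q}^2-r^2\bigr)\,H_q(P),\qquad \deg H_q\le 2N-2 .
\]
This one-parameter family of identities, indexed by $s$, is what I would analyze.

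Next I would complexify and pass to $\mathbb{C}P^2$. Every circle $S_q$ meets the line at infinity $\{z=0\}$ exactly at the isotropic points $I_\pm=(1:\pm\I:0)$, and the linear part $M_q$ of $\sigma_q$, being a reflection, interchanges $I_+\leftrightarrow I_-$. By Theorem \ref{harmonics} and the definition of $F$, the restriction $F|_{S_q}$ is a trigonometric polynomial of degree $\le N$; equivalently, on the rational curve $S_q\cong\mathbb{P}^1$ it has poles only at $I_\pm$, each of order $\le N$. Thus the arithmetic of $F$ along circles is governed by the two isotropic points, and the reflection symmetry is compatible with this precisely because $\sigma_q$ swaps $I_\pm$. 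By Proposition \ref{prop} and Remark \ref{r} the components $\{f_{\pm r}=0\}$ sit inside $\{F=0\}$, so $f_{+r}f_{-r}\mid F$ and the behaviour of $\{f_{\pm r}=0\}$ at infinity is read off from the top-degree part $F_{2N}$ of $F$.

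Passing to top-degree parts in the displayed identity, the affine part of $\sigma_q$ drops out and we get $F_{2N}(P)-F_{2N}(M_qP)=(x^2+y^2)(H_q)_{2N-2}(P)$ for every $M_q$ in the family; thus $F_{2N}$ and all its reflected copies $F_{2N}\circ M_q$ agree modulo $x^2+y^2$. Since $x^2+y^2$ vanishes exactly at $I_\pm$, this singles out the isotropic points and forces $F_{2N}$ to vanish to high order there, the algebraic shadow of the classical fact that offset curves meet the line at infinity with high tangency at $I_\pm$ (\cite{FN},\cite{SS},\cite{SeS}). The content of the theorem at the \emph{other} infinite points, and the smoothness of $\{f_{\pm r}=0\}$ in $\mathbb{C}^2$, would then be extracted from a local reading of the same identity: at a putative singular point of $\{f_{+r}=0\}$ the two local branches would have to be matched by the reflections $\sigma_q$ along the circle family, while differentiating the identity in $s$ and restricting to $\gamma_{\pm r}$ controls $\nabla F$ and its second-order jet there; these constraints are incompatible with a singularity (given $k_{\pm r}>0$) unless the tangent direction of $\gamma$ is constant, i.e. unless $\gamma$ is a circle. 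The analogous local reading at a non-isotropic infinite point would show the intersection with $\{z=0\}$ to be non-transverse, which at a smooth point of the curve is tangency.

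The hard part will be the analysis at the isotropic points $I_\pm$ themselves. Exactly there the factor $x^2+y^2$ vanishes, so the clean leading-form comparison degenerates and one must descend to the subleading terms of the identity — equivalently, examine the principal parts and residues of $F|_{S_q}$ at $I_\pm$, where $F|_{S_q}$ has its only poles. Organizing this Puiseux-level bookkeeping at $I_\pm$, converting the resulting non-transversality into genuine tangency at the remaining infinite points, ensuring smoothness over all of $\mathbb{C}^2$ (and not merely at real points, where $k_{\pm r}>0$ already precludes singularities), and cleanly separating off the circular alternative from the moving-reflection argument, is where the real work lies.
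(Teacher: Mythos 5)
Your opening observation is correct and is a legitimate reformulation of the reflection law: the Larmor centers $P_{\mp}=q+rJv_{\mp}$ are indeed symmetric about the normal line at $q$, so $F-F\circ\sigma_q$ vanishes on the real circle $S_q$ and is divisible by $\abs{P-q}^2-r^2$. But from that point on the text is a plan, not a proof, and the gap is exactly where the theorem's content lies. You never actually derive either of the two conclusions: (i) smoothness of $\{f_{\pm r}=0\}$ in all of $\mathbb{C}^2$ (including complex non-real points, where positivity of $k_{\pm r}$ says nothing), and (ii) tangency at the non-isotropic infinite points. The leading-form identity $F_{2N}-F_{2N}\circ M_q\equiv 0\pmod{x^2+y^2}$ constrains only $F_{2N}$ at the isotropic points; it does not by itself control $\nabla\tilde f_{\pm r}$ at the other infinite points, nor does it see affine singularities at all. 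The phrases ``would then be extracted from a local reading,'' ``these constraints are incompatible with a singularity,'' and ``is where the real work lies'' are precisely the missing argument. The circular-arc alternative is likewise deferred; the paper needs a separate argument there (propagating constancy of $F$ on concentric circles via the rotation $\phi\mapsto\phi+\alpha(\rho)$ and density of irrational circles).

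For comparison, the paper extracts the needed information not from the global divisibility identity but from the \emph{infinitesimal} version of $F(P_-(\epsilon))=F(P_+(\epsilon))$ near grazing: the $\epsilon^3$ coefficient is a derivative along $\gamma_{\pm r}$ of $H(F)+\beta\abs{\nabla F}^3$, which is therefore constant on $\gamma_{\pm r}$; the curvature bounds (\ref{bounds}) force this constant to be nonzero (Proposition \ref{prop1}), and then $g^3\bigl(H(f_{+r})+\beta\abs{\nabla f_{+r}}^3\bigr)^k=const\neq 0$ on the Zariski-dense real arc extends to the whole complex curve. Smoothness in $\mathbb{C}^2$ is then immediate (at a singular point both $H(f_{+r})$ and $\nabla f_{+r}$ vanish, contradicting $const\neq 0$), and homogenizing produces $const\cdot z^p$ with $p>0$ on the right, which at an infinite point yields $(\tilde f_{+r})_x^2+(\tilde f_{+r})_y^2=0$ together with Euler's relation, leaving only the isotropic points or $(\tilde f_{+r})_x=(\tilde f_{+r})_y=0$, i.e.\ tangency with $\{z=0\}$. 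If you want to salvage your route, the missing ingredient is some analogue of this nonvanishing scalar identity along $\gamma_{\pm r}$; without it the reflection-family bookkeeping at $I_\pm$ that you postpone is not a technicality but the whole theorem.
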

\begin{corollary}\label{all}
For any non-circular domain $\Omega$ in the plane, the magnetic
billiard inside $\Omega$ is not algebraically integrable for all but
finitely many values of $\beta$.
\end{corollary}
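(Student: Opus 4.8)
The plan is to fix the non-circular domain $\Omega$ and to extract from Theorem \ref{main} a necessary condition on a single smooth piece $\gamma$ of $\partial\Omega$, which I will then test against varying $\beta=1/r$. First I would dispose of the two degenerate possibilities. If every smooth piece of $\partial\Omega$ is a circular arc, then, since $\partial\Omega$ itself is not a circle, the first alternative of Theorem \ref{main} is excluded while the second is impossible (it demands \emph{non-circular} pieces); hence no polynomial integral exists for \emph{any} $\beta$, which is far more than claimed. If instead some smooth piece $\gamma$ is not an arc of a circle but does not lie on any affine algebraic curve, then again Theorem \ref{main} forbids an integral for every $\beta$. Thus the only case requiring work is that in which $\gamma$ lies on a fixed irreducible algebraic curve $\{f=0\}$ that is not a circle, and it suffices to show that the conclusions of Theorem \ref{main} can hold for $\gamma$ for at most finitely many $r$.

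The core idea is to display the offsets $\gamma_{\pm r}$ as an algebraic family in the single parameter $r$. Eliminating the unit normal $J\tau=\nabla f/\norm{\nabla f}$ from the defining relation of the offset, that is, rationalising the square root $\norm{\nabla f}=\sqrt{f_x^2+f_y^2}$, produces a polynomial $\hat f(x,y,r)$ with coefficients polynomial in $r^2$ which, for each admissible $r$, is a defining equation of $\gamma_{\pm r}$; homogenising in $(x,y)$ gives $\tilde{\hat f}(x,y,z,r)$. The two requirements singled out in Theorem \ref{main}, namely that $\{f_{\pm r}=0\}$ be smooth in $\mathbb{C}^2$ and that every non-singular intersection of $\{\tilde f_{\pm r}=0\}$ with $\{z=0\}$ away from the points $(1:\pm i:0)$ be a tangency, then translate into the simultaneous vanishing of finitely many polynomials $P_1(r),\dots,P_m(r)$ built from the discriminant of $\hat f$ and from the local intersection multiplicities of $\tilde{\hat f}(\cdot,\cdot,0,r)$ with $z=0$. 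Consequently the set $B$ of admissible $r$ for which the conclusion of Theorem \ref{main} holds for $\gamma$ is the common zero locus of the $P_j$ in the one real variable $r$, so $B$ is either finite or all of the admissible interval $r>1/\min_{\gamma}k$. In the finite case we are done: for every $\beta=1/r$ with $r\notin B$ the piece $\gamma$ violates a necessary condition of Theorem \ref{main}, hence no polynomial integral exists, leaving only finitely many exceptional $\beta$.

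It therefore remains to exclude the alternative that $B$ is the whole interval, i.e.\ that the conditions hold identically in $r$. Since $\gamma$ is a bounded arc on $\{f=0\}$ but $\{f=0\}$ meets the line at infinity, I would first continue the polynomial identities $P_j(r)\equiv 0$ to all complex $r$ (note that $r=0$ lies outside the admissible window $r>1/\min_{\gamma}k$, so this algebraic continuation is an essential preliminary step), and then specialise. The displacement $\pm rJ\tau$ is bounded, so the real asymptotic directions of $\gamma_{\pm r}$ coincide with those of $\gamma$ for all $r$; specialising the identity at $r\to 0$, where $\tilde{\hat f}$ reduces to a polynomial supported on $\{\tilde f=0\}$, one should be able to read off that these fixed real, non-isotropic directions at infinity must be tangential already for $\gamma$ and must remain so under every offset. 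The aim is to show that this rigidity is incompatible with $\gamma$ being a fixed non-circular algebraic curve, forcing the contradiction that $\gamma$ is after all a circle.

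The hard part will be exactly this last step. Proving that a fixed non-circular algebraic curve cannot have \emph{all} of its offsets tangent to the line at infinity at every non-isotropic real direction is a genuine rigidity statement, and extracting it cleanly from the identity $P_j(r)\equiv 0$ is the main obstacle. Two technical points compound the difficulty: the specialisation $r\to 0$ must be justified by algebraic continuation because $r=0$ is not an admissible billiard parameter, and the isotropic points $(1:\pm i:0)$ — which Theorem \ref{main} deliberately excludes — are typically singular points of the offset family, so their contribution to the discriminants and intersection numbers must be separated off without contaminating the information carried by the non-isotropic directions. Controlling the offset family uniformly in $r$ through this isotropic degeneration is where I expect the real work to lie.
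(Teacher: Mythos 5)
Your framework is the right one --- $f_{\pm r}$ depends polynomially on $x,y$ and $r$, so the set of $r$ for which the necessary conditions of Theorem \ref{main} hold for a fixed non-circular piece $\gamma$ is a constructible subset of the $r$-line and hence either finite or cofinite --- and your disposal of the degenerate cases (all pieces circular, or $\gamma$ not algebraic) is correct. But the proof is not complete: everything hinges on ruling out the cofinite alternative, i.e.\ on exhibiting infinitely many $r$ for which some condition of Theorem \ref{main} actually fails, and that is exactly the step you leave open. You choose to attack it through the tangency-at-infinity condition, via an algebraic continuation to $r=0$ and a rigidity statement about offsets at the line at infinity, and you yourself flag this as ``the main obstacle'' without supplying an argument. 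As written, the proposal establishes only the dichotomy, not the corollary.

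The paper closes this gap by a much shorter route, using the \emph{smoothness} condition in $\mathbb{C}^2$ rather than the behaviour at infinity. Since $\gamma$ is a non-circular arc of positive curvature, its curvature takes all values in a nonempty open interval $(k_{\min},k_{\max})$, and by elementary differential geometry the inner parallel curve $\gamma_{+r}$ develops a real cusp whenever $1/r$ lies in that interval (the offset is singular precisely where $r$ equals a radius of curvature of $\gamma$). Hence the system $f_{+r}=\partial_x f_{+r}=\partial_y f_{+r}=0$ is solvable for an open interval of real $r$; the projection of this algebraic set in $\mathbb{C}^3$ to the $r$-line is constructible, so it must be cofinite, and therefore $\{f_{+r}=0\}$ has a singular point in $\mathbb{C}^2$ for all but finitely many $r$ --- in particular for all but finitely many $r$ in the admissible range $r>1/k_{\min}$, where Theorem \ref{main} then forbids a polynomial integral. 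Note the contrast with your plan: the real cusps occur only \emph{outside} the admissible window $\beta<k_{\min}$, and it is precisely the algebraic dependence on $r$ (the continuation you invoke only for the limit $r\to 0$) that transports their effect, as complex singularities, into the admissible window. Replacing your entire analysis at infinity by this one observation about cusps of inner offsets completes the argument.
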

\begin{proof}
Indeed, $f_{\pm r}$ depends on $r$ as polynomial function, so
$f_{\pm r}$ is a polynomial in $x,y$ and $r$.
 Moreover, since every piece
$\gamma$ has positive curvature bounded from below by $\beta$, then
there is an open interval $\frac{1}{r}\in(k_{\min};k_{\max})$ where
one can claim using differential geometry argument that
$\gamma_{+r}$ does have singularities. Hence, the system of
equations
$$
 \partial_x f_{+r}=\partial_y f_{+r}=f_{+r}=0
$$
defines in ${\mathbb C}^3$ an algebraic curve and its projection on
 $r$-line is Zariski open set.
 It then follows that singularities persist for all
but finitely many $r.$
\end{proof}

It may happen that refining our method below one can prove that the
only magnetic billiard admitting polynomial integral is circular.
But this is out of reach at present moment.

\begin{remark}
Our main result implies in particular that if one of the arcs of
magnetic billiard is circular then only the circle is algebraically
integrable. This is in contrast to the zero magnetic field case
where there exist polygons such that billiard flows admit polynomial
integrals (see \cite{KT}).
\end{remark}
\begin{corollary}Let $\Omega$ be an interior of the standard ellipse:
$$\gamma=\left\{\frac{x^2}{a^2}+\frac{y^2}{b^2}=1\right\},\quad 0<b<a.$$
 Then the for magnetic field of all magnitudes
 $0<\beta<k_{\min}=\frac{b}{a^2}$
magnetic billiard in ellipse is not algebraically integrable.
\end{corollary}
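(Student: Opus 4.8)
The plan is to reduce the statement to Theorem \ref{main} and then contradict its conclusion using the explicit offset geometry of the ellipse. Since $0<b<a$, the boundary $\gamma$ is an ellipse and not a circle, and its curvature $k(t)=ab\,(a^2\sin^2 t+b^2\cos^2 t)^{-3/2}$ attains its minimum $k_{\min}=b/a^2$ at the ends of the minor axis; thus for every $\beta$ in the range $0<\beta<b/a^2$ the hypothesis that each smooth piece has curvature at least $\beta$ is satisfied, and the convexity bounds (\ref{bounds}) are in force. Arguing by contradiction, I would suppose the magnetic billiard in $\Omega$ admits a polynomial integral. As $\gamma$ is not circular, Theorem \ref{main} then forces the affine curve $\{f_{\pm r}=0\}$ --- which for the ellipse is a single irreducible octic with $f_{+r}=f_{-r}$, see \cite{FN}, \cite{SS}, \cite{SeS} --- to be \emph{smooth in} $\mathbb{C}^2$. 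The goal is to exhibit a singular point of this octic in $\mathbb{C}^2$, contradicting that conclusion.

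First I would write the inner parallel curve as $\gamma_{+r}(t)=\gamma(t)+rJ\tau(t)$ with $\gamma(t)=(a\cos t,b\sin t)$ and $\tau$ the unit tangent, and compute, via the Frenet relation $\frac{d}{ds}(J\tau)=-k\,\tau$, that the velocity of $\gamma_{+r}$ is proportional to $(1-r\,k)$. Hence $\gamma_{+r}$ develops a cusp exactly where $r\,k(t)=1$, i.e.\ where $k(t)=\beta$. For real $t$ this never happens, since $k(t)\ge k_{\min}>\beta$; this is precisely why the real parallel curves are regular, as already guaranteed by (\ref{bounds}). The essential point is to pass to complex $t$: the cusp equation $k(t)=\beta$ becomes $a^2\sin^2 t+b^2\cos^2 t=(abr)^{2/3}$, i.e.\ $\sin^2 t=\bigl((abr)^{2/3}-b^2\bigr)/(a^2-b^2)$.

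Since $r=1/\beta>a^2/b$, one has $(abr)^{2/3}>a^2$ and therefore $\sin^2 t_0>1$ for every solution $t_0$, forcing $t_0$ to be non-real. I would then check that the corresponding cusp point $\gamma_{+r}(t_0)$ is a \emph{finite} point of $\mathbb{C}^2$: the squared denominator of the unit tangent, $a^2\sin^2 t_0+b^2\cos^2 t_0=(abr)^{2/3}\ne 0$, so $\tau(t_0)$ and hence $\gamma_{+r}(t_0)$ do not escape to the line at infinity, and in particular $\gamma_{+r}(t_0)$ is not one of the isotropic points $(1:\pm i:0)$. Since this point lies on the octic $\{f_{\pm r}=0\}$ (by analytic continuation of the identity $f_{+r}\circ\gamma_{+r}\equiv 0$ from real to complex $t$) and is a cusp of the branch $\gamma_{+r}$, it is a singular point of the octic in $\mathbb{C}^2$, contradicting the smoothness forced by Theorem \ref{main}. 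Because the strict inequality $\sin^2 t_0>1$ holds for \emph{every} $r>a^2/b$, the contradiction is uniform in $\beta$, upgrading the ``all but finitely many'' statement of Corollary \ref{all} to all magnitudes $0<\beta<b/a^2$.

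The hard part will be the algebro-geometric step of confirming that the complex critical point $\gamma_{+r}(t_0)$ is a genuine singular point of $f_{\pm r}$, rather than a regular point at which the chosen parametrization merely stalls. This amounts to checking that the foot-of-normal parametrization $t\mapsto\gamma_{+r}(t)$ is birational onto the octic --- so that it factors through the normalization and a vanishing velocity at $t_0$ (where $w^2=a^2\sin^2 t_0+b^2\cos^2 t_0\ne 0$ makes $t$ a local coordinate) forces a true singularity of the image. Generic injectivity is plausible because a generic offset point determines its foot on $\gamma$ and the sign uniquely; I would make this precise using the explicit degree-$8$ elimination of \cite{FN}, \cite{SS}, \cite{SeS}, after which the verification reduces to a routine local computation. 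As an alternative route to the contradiction, one could instead examine the two non-isotropic points at infinity $(a:\pm bi:0)$, inherited from the asymptotic directions of the ellipse, and show that the octic crosses the infinite line there transversally rather than tangentially, again violating Theorem \ref{main}.
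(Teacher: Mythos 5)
Your overall strategy is the same as the paper's: invoke Theorem \ref{main}, note the ellipse is not a circle, and contradict the forced smoothness of the irreducible degree-$8$ offset curve by exhibiting a singular point in $\mathbb{C}^2$. Where you diverge is in \emph{which} singularities you produce and how you certify them. The paper writes down the explicit octic $f_{\pm r}$ from \cite{FN} and simply lists four affine singular points on the coordinate axes, namely $(0,\pm\sqrt{b^2-a^2}\sqrt{a^2-r^2}/a)$ and $(\pm\sqrt{a^2-b^2}\sqrt{b^2-r^2}/b,0)$; these are self-intersection points of the algebraic offset (two distinct parameter values, related by the axial symmetry $t\mapsto\pi-t$ resp. $t\mapsto -t$, map to the same point), they exist for every $r>a^2/b$, and their singularity is checked by direct substitution into $f=f_x=f_y=0$ --- no statement about the parametrization is needed. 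You instead locate complex cusps, i.e.\ parameter values with $k(t_0)=\beta$, whose non-reality you correctly deduce from $\sin^2t_0=((abr)^{2/3}-b^2)/(a^2-b^2)>1$; that computation, and the verification that the point is finite and non-isotropic, are fine.

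The genuine gap is the one you yourself flag: a zero of the velocity of the parametrization $t\mapsto\gamma(t)+rJ\tau(t)$ is a singular point of the image curve only if the parametrization is birational onto the octic (equivalently, factors through the normalization with degree one); otherwise the map could merely be ramified over a smooth point. You defer this to ``generic injectivity is plausible'' plus a citation, but this properness/birationality statement is exactly the nontrivial algebro-geometric input, and establishing it is comparable in effort to the direct verification the paper performs on the explicit octic. Until that step is closed, your cusps are not certified singularities of $\{f_{\pm r}=0\}$, so the contradiction with Theorem \ref{main} is not yet complete. A cheaper repair within your own framework would be to switch to the node-type singularities: exhibit two distinct parameter values (or a parameter value and a sign of the normal) mapping to the same affine point with distinct tangent directions there; a point carrying two analytic branches of an algebraic curve is singular with no birationality hypothesis, and this is in effect what the paper's four axis points encode.
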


\begin{proof}
The equation of the parallel curves for ellipse is the following
(see for instance \cite{FN}):
$$
a^8 (b^4+(r^2-y^2)^2-2 b^2 (r^2+y^2))+b^4 (r^2-x^2)^2 (b^4-2 b^2 (r^2-x^2+y^2)+(x^2+y^2-r^2)^2)
$$
$$
-2
a^6 (b^6+(r^2-y^2)^2 (r^2+x^2-y^2)-b^4 (r^2-2 x^2+3 y^2)-b^2 (r^4+3 y^2 (x^2-y^2)+
$$
$$
r^2 (3
x^2+2 y^2)))+
2 a^2 b^2 (-b^6 (r^2+x^2)-(-r^2+x^2+y^2)^2 (r^4-x^2 y^2-r^2 (x^2+y^2))+
$$
$$
b^4 (r^4-3 x^4+3
x^2 y^2+r^2 (2 x^2+3 y^2))+b^2 (r^6-2 x^6+x^4 y^2-3 x^2 y^4+r^4(-4 x^2+2 y^2)+
$$
$$
r^2 (5 x^4-3 x^2 y^2-3 y^4)))+
a^4 (b^8+2 b^6 (r^2+3 x^2-2 y^2)+(r^2-y^2)^2 (-r^2+x^2+y^2)^2-
$$
$$
2 b^4 (3 r^4-3 x^4+5 x^2 y^2-3 y^4+4
r^2 (x^2+y^2))+2 b^2 (r^6-3 x^4 y^2+x^2 y^4-2 y^6+
$$
$$
 2 r^4 (x^2-2 y^2)+r^2 (-3 x^4-3 x^2 y^2+5 y^4)))=0.
$$
It turns out to be irreducible.
Moreover the parallel curves $\gamma_{\pm r}$ have singularities in
the Complex plane for every $r>\frac{1}{k_{\min}}=\frac{a^2}{b}>b$
as follows:
$$
 (0,\pm\frac{\sqrt{b^2-a^2}\sqrt{a^2-r^2}}{a}),\ (\pm\frac{\sqrt{a^2-b^2}\sqrt{b^2-r^2}}{b},0).
$$
Therefore the result follows from
Theorem \ref{main}.
\end{proof}

\subsection{Outer magnetic billiards}
It is remarkable that the action of billiard map $\M$ coincides
with, what we call Outer magnetic billiard. In addition, the result
which we get by our method provides the extension of Theorem by
Tabachnikov \cite{Tab} to the case of Outer magnetic billiards.

%\vskip15mm

%\begin{picture}(170,100)(-100,-15)

%\qbezier(40,70)(41,99)(70,100)
%\qbezier(70,100)(99,99)(100,70)
%\qbezier(100,70)(99,41)(70,40)
%\qbezier(70,40)(41,41)(40,70)

%\put(120,100){\circle*{2}}
%\put(120,40){\circle*{2}}
%\put(100,70){\circle*{2}}

%\put(40,70){\vector(0,3){1}}
%\put(115,97){\vector(-1,-1){1}}

%\qbezier(120,100)(101,90)(100,70)
%\qbezier(70,100)(118,90)(120,50)
%\qbezier(120,40)(101,50)(100,70)

%\put(123,97){\shortstack{$P$}}
%\put(123,37){\shortstack{$T(P)$}}

%\put(35,40){\shortstack{$\Gamma$}}

%\put(20,30){\vector(0,-3){1}}

%\put(67,51){\shortstack{$\Omega$}}
%\put(67,97){\shortstack{$\Omega_r$}}

%\put(130,100){\shortstack{$\gamma_{-r}$}}
%\put(33,23){\shortstack{$\gamma_{+r}$}}

%\put(70,30){\circle*{2}}

%\put(62,35){\shortstack{$\gamma(s)$}}

%\put(0,10){\shortstack{Fig. 3 {\footnotesize Outer magnetic billiard.}}}

%Orientation of $\Gamma$ is agreed with orientation of Larmor circle.}}}

%\end{picture}

 Let us introduce Outer magnetic billiard in
a natural way. Let $\Gamma$ be a smooth convex curve in the plane
with a fixed orientation (not necessarily counterclockwise). Let
$\beta>0$ be the magnitude of the magnetic field. Given a point $P$
outside $\Gamma$ we define $T(P)$ as follows: Consider the Larmor
circle of radius $r=\frac{1}{\beta}$ starting from $P$ tangent to
$\Gamma$ at $\Gamma(s)$ with the agreed orientation at $\Gamma(s)$
and then define $T(P)$ to lie on the same Larmor circle so that the
arcs $(P;\Gamma(s))$ and $(\Gamma(s);T(P))$ have the same angular
measure.

%\vskip15mm

%\begin{picture}(170,100)(-100,-15)

%\qbezier(40,70)(41,99)(70,100)
%\qbezier(70,100)(99,99)(100,70)
%\qbezier(100,70)(99,41)(70,40)
%\qbezier(70,40)(41,41)(40,70)

%\put(40,70){\vector(0,-3){1}}
%\put(70,100){\vector(-3,0){1}}

%\qbezier(20,50)(22,98)(70,100)
%\qbezier(70,100)(118,98)(120,50)

%\put(20,50){\circle*{2}}
%\put(120,50){\circle*{2}}

%\put(123,51){\shortstack{$P$}}
%\put(-10,51){\shortstack{$T(P)$}}

%\put(35,40){\shortstack{$\Gamma$}}

%\put(20,30){\vector(0,-3){1}}

%\put(67,51){\shortstack{$\Omega$}}
%\put(67,97){\shortstack{$\Omega_r$}}

%\put(130,100){\shortstack{$\gamma_{-r}$}}
%\put(33,23){\shortstack{$\gamma_{+r}$}}

%\put(70,30){\circle*{2}}

%\put(62,35){\shortstack{$\gamma(s)$}}

%\put(0,10){\shortstack{Fig. 4 {\footnotesize Outer magnetic billiard.}}}

%Orientation of $\Gamma$ is agreed with orientation of Larmor circle.}}}

%\end{picture}

Notice that there are two different cases:

1) In this case the orientation on $\Gamma$ is clockwise then $T$ is
well defined for any $\beta>0$ (see Fig. 3).

\vskip20mm

\begin{picture}(170,100)(-100,-60)

\qbezier(10,30)(11,49)(30,50)
\qbezier(30,50)(49,49)(50,30)
\qbezier(50,30)(49,11)(30,10)
\qbezier(30,10)(11,11)(10,30)

\put(30,50){\vector(3,0){1}}

\put(27,30){\shortstack{$\Gamma(s)$}}

\put(7,45){\shortstack{$\Gamma$}}

\qbezier(50,30)(52,78)(100,80)
\qbezier(100,80)(148,78)(150,30)
\qbezier(150,30)(148,-18)(100,-20)
\qbezier(100,-20)(52,-18)(50,30)

\put(71,72){\circle*{2}}
\put(71,-12){\circle*{2}}

\put(63,74){\shortstack{$P$}}
\put(51,-23){\shortstack{$T(P)$}}

\put(50,30){\circle*{2}}

\put(50,30){\line(0,-4){20}}
\put(50,10){\vector(0,-3){1}}
\put(43,2){\shortstack{$\tau$}}

\put(50,30){\line(4,0){20}}
\put(70,30){\vector(3,0){1}}
\put(70,34){\shortstack{$J\tau$}}

\put(152,20){\shortstack{$\Gamma_{_{+2r}}(s)$}}

\put(100,80){\vector(-3,0){1}}

\put(100,30){\circle*{2}}
\put(100,20){\shortstack{$O$}}

%\put(130,100){\shortstack{$\gamma_{-r}$}}
%\put(33,23){\shortstack{$\gamma_{+r}$}}

\put(150,30){\circle*{2}}

\put(-50,-50){\shortstack{Fig. 3 {\footnotesize Outer billiard map
$P\rightarrow T(P)$ for clockwise orientation on $\Gamma$.}}}

\end{picture}

2) However, if the orientation on $\Gamma$ is counterclockwise then
$T$ is well defined for $0<\beta<k_{min}$ (see Fig. 4).

\vskip25mm

\begin{picture}(170,100)(-100,-60)

\qbezier(50,60)(51,79)(70,80)
\qbezier(70,80)(89,79)(90,60)
\qbezier(90,60)(89,41)(70,40)
\qbezier(70,40)(51,41)(50,60)

\put(90,45){\shortstack{$\Gamma$}}

\put(71,85){\shortstack{$\Gamma(s)$}} \put(70,80){\circle*{2}}

\put(70,80){\line(-4,0){20}}
\put(50,80){\vector(-3,0){1}}
\put(45,85){\shortstack{$\tau$}}

\put(70,80){\line(0,-4){20}}
\put(70,60){\vector(0,-3){1}}
\put(72,55){\shortstack{$J\tau$}}

\qbezier(20,30)(22,78)(70,80)
\qbezier(70,80)(118,78)(120,30)
\qbezier(120,30)(118,-18)(70,-20)
\qbezier(70,-20)(22,-18)(20,30)

\put(108,65){\circle*{2}}
\put(110,67){\shortstack{$P$}}
\put(32,65){\circle*{2}}
\put(6,67){\shortstack{$T(P)$}}

\put(70,-20){\circle*{2}}
\put(57,-30){\shortstack{$\Gamma_{_{+2r}}(s)$}}

\put(20,30){\vector(0,-3){1}}
\put(50,60){\vector(0,-3){1}}

\put(70,30){\circle*{2}}
\put(60,20){\shortstack{$O$}}

%\put(130,100){\shortstack{$\gamma_{-r}$}}
%\put(33,23){\shortstack{$\gamma_{+r}$}}

\put(70,30){\circle*{2}}
\put(60,20){\shortstack{$O$}}

\put(-50,-50){\shortstack{Fig. 4 {\footnotesize Outer billiard map
$P\rightarrow T(P)$ for counterclockwise orientation on $\Gamma$.
}}}

\end{picture}

\begin{remark}
In fact in the second case one can allow that $\Gamma$ is
$C^1-$smooth
 curve which is piece-wise $C^2$, having arcs of radius
$r$ interlaced between non-circular $C^2$ pieces.
\end{remark}

 In
both cases 1) and2) the domain where the Outer billiard map $T$ is
defined is the annulus $A$ bounded by $\Gamma$ and $\Gamma_{+2r}$
(see Figure 3, 4). We call the dynamical system $T$ the Outer
magnetic billiard. It is not hard to check that $T$ is a symplectic
map of the Annulus $A$. Next theorem shows that Outer magnetic
billiard $T$ in the case 2) is in fact isomorphic to Birkhoff
magnetic billiard $\mathcal{M}$:
\begin{theorem}
Magnetic billiard map $\M:\Omega_r\rightarrow \Omega_r$ (defined in
subsection 1.3) coincides with Outer billiard $T$ determined by the
inner boundary $\gamma_{+r}$ of $\Omega_{r},$ endowed with a
counterclockwise orientation.
\end{theorem}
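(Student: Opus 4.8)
The plan is to realize both maps through a single auxiliary circle and to reduce each of them to one and the same reflection. Fix the smooth boundary point $\gamma(s)$ at which the reflection occurs, let $v_-$ be the incoming unit velocity and $v_+=v_--2\langle n,v_-\rangle n$ the reflected one, where $n=J\tau(s)$ is the inward normal. By definition the two Larmor centres are $P_\pm=\gamma(s)+rJv_\pm$, and $\M(P_-)=P_+$. The first observation I would record is that, since $|rJv_\pm|=r$, both $P_-$ and $P_+$ lie at distance exactly $r$ from the impact point $\gamma(s)$; hence both lie on the circle $D$ of radius $r$ centred at $\gamma(s)$.

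Next I would identify $D$ with the circle that the Outer billiard selects. Since $\gamma(s)$ lies on $\gamma$, the tangency property recorded just after the bounds (\ref{bounds}) shows that $D$ is tangent to the inner boundary $\gamma_{+r}$ from outside, the tangency point being $Q=\gamma(s)+rJ\tau(s)=\gamma_{+r}(s)$. As $P_-\in\Omega_r$ lies outside $\gamma_{+r}$ and $D$ passes through $P_-$ while being tangent to $\gamma_{+r}$ with the agreed counterclockwise orientation, $D$ is precisely the Larmor circle used to define $T(P_-)$; here one invokes the weak–field convexity assumption to guarantee that this tangent Larmor circle is unique.

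It then remains to check that the two prescriptions for the image agree. The Outer billiard sends $P_-$ to the point of $D$ for which $Q$ bisects the arc joining $P_-$ to the image; equivalently $T(P_-)$ is the reflection of $P_-$ across the diameter of $D$ through $Q$, which is the line joining the centre $\gamma(s)$ to $Q$, i.e. the normal line to $\gamma$ at $\gamma(s)$ (direction $n$). On the other hand the billiard law reflects velocities across the tangent line, $v_-\mapsto v_+$; conjugating by $J$ turns this into a reflection of $Jv_-$ across the normal line, so that $P_+-\gamma(s)=rJv_+$ is exactly the reflection of $P_--\gamma(s)=rJv_-$ across the normal line through $\gamma(s)$. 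Hence $T(P_-)=P_+=\M(P_-)$, which is the assertion.

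The only genuinely delicate points, which I would treat with care, are orientation bookkeeping. First, one should note that the unit tangent of $\gamma_{+r}$ is $-\tau$ (because $1-rk<0$ under the weak–field assumption), so that the annulus bounded by $\gamma_{+r}$ and $(\gamma_{+r})_{+2r}=\gamma_{-r}$ is indeed $\Omega_r$ with the counterclockwise orientation on $\gamma_{+r}$, placing us in case 2) of the Outer billiard. Second, one must verify that ``equal angular measure along the oriented Larmor circle'' selects the reflection yielding $P_+$ rather than the antipodal alternative; this is where I expect the main friction, though it dissolves once one records that mapping $P_-$ to the point at angular position $2\phi_Q-\phi_0$ (for $Q$ at $\phi_Q$ and $P_-$ at $\phi_0$) is the reflection across the diameter through $Q$ irrespective of the traversal direction. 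Both points follow from the $J$–equivariance used above together with convexity of $\Omega$ with respect to circles of radius $r$, which fixes all the signs and guarantees uniqueness of the tangent circle.
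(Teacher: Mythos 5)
Your proposal is correct and follows essentially the same route as the paper's (much briefer) argument: both hinge on the circle of radius $r$ centred at the impact point, which contains the two Larmor centres $P_{\pm}$ and is tangent to $\gamma_{+r}$ at ${\mathcal L}(Q,\tau)$, so that the equal--arc prescription of $T$ reduces to the $J$--conjugate of the reflection law. You merely supply the orientation bookkeeping and the explicit reflection-across-the-normal computation that the paper leaves to its Figures 5 and 6.
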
 Indeed,
let $P_-,P_+$ be the centers of two Larmor circles $C_-,C_+$ such
that $C_-$ is reflected to $C_+$ at the point $Q$ of the smooth part
$\gamma$ of the boundary $\partial\Omega$. Then it follows from the
definition of reflection law, that the circle of radius $r$ with the
center in $Q$ oriented counterclockwise passes from $P_-$ to $P_+$
and is tangent $\gamma_{+r}$ at the point $P={\mathcal
L}(Q,\tau(Q))$ (see Fig. 5, 6 for two different configurations of
$C_-$ and $C_+$).

\vskip20mm

\begin{picture}(170,100)(-100,-50)

\qbezier(50,50)(51,69)(70,70)
\qbezier(70,70)(89,69)(90,50)
\qbezier(90,50)(89,31)(70,30)
\qbezier(70,30)(51,31)(50,50)

\put(70,70){\vector(-3,0){1}}

\qbezier(50,50)(56,37)(70,30)
\qbezier(90,50)(84,37)(70,30)
\put(57,40){\vector(1,-1){1}}
\put(83,40){\vector(1,1){1}}

\put(50,50){\circle*{2}}
\put(90,50){\circle*{2}}
\put(43,51){\shortstack{$q$}}
\put(92,51){\shortstack{$p$}}

\put(100,72){\circle*{2}}
\put(40,72){\circle*{2}}
\put(102,73){\shortstack{$P_{-}$}}
\put(24,74){\shortstack{$P_{+}$}}

\qbezier(40,50)(41,79)(70,80)
\qbezier(70,80)(99,79)(100,50)
\qbezier(100,50)(99,21)(70,20)
\qbezier(70,20)(41,21)(40,50)

\put(40,50){\vector(0,-3){1}}

\put(70,80){\circle*{2}}
\put(67,83){\shortstack{$P$}}

\qbezier(20,30)(22,78)(70,80)
\qbezier(70,80)(118,78)(120,30)
\qbezier(120,30)(118,-18)(70,-20)
\qbezier(70,-20)(22,-18)(20,30)

\put(20,30){\vector(0,-3){1}}

\put(66,51){\shortstack{$\Omega$}}
%\put(67,97){\shortstack{$\Omega_r$}}

%\put(130,100){\shortstack{$\gamma_{-r}$}}
\put(34,23){\shortstack{$\gamma_{_{+r}}$}}

\put(70,30){\circle*{2}}

\put(65,35){\shortstack{$Q$}}
%\put(62,35){\shortstack{$\gamma(s)$}}

\put(-39,-35){\shortstack{Fig. 5 {\footnotesize Arcs $(q;Q)$ and $(Q;p)$ belong to $C_{-}$ and $C_{+}$.}}}

\end{picture}

\vskip20mm

\begin{picture}(170,100)(-100,-55)

\qbezier(50,10)(51,29)(70,30)
\qbezier(70,30)(89,29)(90,10)
\qbezier(90,10)(89,-9)(70,-10)
\qbezier(70,-10)(51,-9)(50,10)

\put(70,-10){\vector(3,0){1}}

\qbezier(50,10)(64,19)(70,30)
\qbezier(90,10)(76,19)(70,30)

\put(58,16){\vector(1,1){1}}
\put(82,16){\vector(1,-1){1}}

\put(50,10){\circle*{2}}
\put(90,10){\circle*{2}}
\put(43,11){\shortstack{$q$}}
\put(92,11){\shortstack{$p$}}

\put(115,54){\circle*{2}}
\put(26,55){\circle*{2}}
\put(27,49){\shortstack{$P_{-}$}}
\put(102,49){\shortstack{$P_{+}$}}

\put(70,80){\circle*{2}}
\put(67,85){\shortstack{$P_0$}}

\qbezier(10,55)(30,79)(70,80)
\qbezier(70,80)(110,79)(130,55)
\put(125,64){\shortstack{$\gamma_{_{-r}}$}}

\put(16,61){\vector(-1,-1){1}}

\qbezier(40,10)(41,39)(70,40)
\qbezier(70,40)(99,39)(100,10)
\qbezier(100,10)(99,-19)(70,-20)
\qbezier(70,-20)(41,-19)(40,10)

\put(40,10){\vector(0,-3){1}}

\put(70,-20){\circle*{2}}
\put(66,-32){\shortstack{$P$}}

\qbezier(20,30)(22,78)(70,80)
\qbezier(70,80)(118,78)(120,30)
\qbezier(120,30)(118,-18)(70,-20)
\qbezier(70,-20)(22,-18)(20,30)

\put(20,30){\vector(0,-3){1}}

\put(66,1){\shortstack{$\Omega$}}

\put(45,44){\shortstack{$\gamma_{_{+r}}$}}

\put(70,30){\circle*{2}}

\put(65,16){\shortstack{$Q$}}

\put(-39,-50){\shortstack{Fig. 6 {\footnotesize Arcs $(q;Q)$ and $(Q;p)$ belong to $C_{-}$ and $C_{+}$.}}}

\end{picture}

\begin{remark}
 Let us
remark, that the map $T$ in the case 1) (see Figure 3) is not
isomorphic to Magnetic Birkhoff billiard globally, by topological
reasons. This can be seen already for the case when $\Gamma$ is a
circle. Indeed the difference between rotation numbers of two
boundaries $\Gamma,\Gamma_{+2r}$ equals 0 for case 1) and equals
$2\pi$ in case 2). Nevertheless, since our method below is
concentrated near the boundaries it applies for both cases 1) and
2).
\end{remark}

Let $F$ be a polynomial which is invariant under $T.$ As before we
shall call $F$ polynomial integral. Then similarly to the Birkhoff
magnetic billiard we have that $\Gamma,\Gamma_{+2r}$ lie in
$\{F=const\}$ and therefore are algebraic (similarly to Proposition
\ref{prop}). Our result for outer magnetic billiards reads:

\begin{theorem}
\label{outer} Assume that there exists a non-constant Polynomial $F$
such that $F$ is invariant under Outer billiard map $T$. Let
$f,f_{+2r}$ are irreducible defining polynomials of
$\Gamma,\Gamma_{+2r}.$ Then the following alternative holds. Either
$\Gamma$ is a circle, or the curves $\{f=0\},\{f_{+2r}=0\}$ in
$\mathbb{C}^2$ are smooth with the property that any non-singular
intersection point of the projective curves
$\{\tilde{f}=0\},\{\tilde{f}_{+2r}=0\}$ in $\mathbb{C}P^2$ (here
$\tilde{f}$ is a homogenization of $f$) with the infinite line
$\{z=0\}$ which is not an isotropic point $(1:\pm i:0),$ must be a
point of tangency.
\end{theorem}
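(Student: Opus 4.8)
The plan is to run the method behind Theorem \ref{main} directly on the outer billiard map $T$, after observing that $T$ is governed by exactly the same reflection structure on Larmor circles. First I would record the geometric normal form of $T$. For a point $P$ in the annulus $A$, the defining Larmor circle $C$ of radius $r$ is tangent to $\Gamma$ at some $\Gamma(s)$, its centre $O(s)=\Gamma(s)+rn(s)$ runs over the offset curve $\Gamma_{+r}$, and its two antipodal points in the normal direction are $\Gamma(s)\in\Gamma$ and $\Gamma_{+2r}(s)\in\Gamma_{+2r}$. Since $\Gamma(s)$ bisects the arc $(P;T(P))$, the map $T$ restricted to $C$ is precisely the reflection $R_s$ across the line $O(s)\Gamma(s)$, that is across the normal to $\Gamma$ at $\Gamma(s)$. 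In case 2) this reflection description is exactly the magnetic Birkhoff map $\M$ for the inner boundary $\gamma_{+r}=\Gamma$ (with $\Gamma_{+2r}=\gamma_{-r}$), so the statement reduces to Theorem \ref{main}; the point of the computation below is that it is purely local near the two boundary curves and uses only $R_s$, so it applies verbatim to case 1) as well, where the global isomorphism fails for rotation-number reasons.

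Next I would set up the functional equation. After normalising (as in Remark \ref{r}) so that $F=0$ on both $\Gamma$ and $\Gamma_{+2r}$, the irreducible polynomials satisfy $f\mid F$ and $f_{+2r}\mid F$. The invariance $F\circ T=F$, together with the fact that $F$ restricted to any radius-$r$ circle is a trigonometric polynomial (Theorem \ref{harmonics}), forces $F|_{C(s)}$ to be an even trigonometric polynomial about the normal axis, $F|_{C(s)}(\psi)=F|_{C(s)}(2\beta_0(s)-\psi)$. I would then complexify through $u=x+\I y,\ v=x-\I y$. In these coordinates every radius-$r$ circle becomes the conic $(u-o)(v-\bar o)=r^2$ whose two points at infinity are the $u$- and $v$-directions, i.e. exactly the isotropic points $(1:\pm\I:0)$; and $R_s$ becomes the affine holomorphic involution with linear part $(u,v)\mapsto(-\tau(s)^2 v,-\tau(s)^{-2}u)$. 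Because $R_s$ preserves $C(s)$, invariance is equivalent to the polynomial divisibility
$$
F(u,v)-F(R_s(u,v))=\bigl((u-o)(v-\bar o)-r^2\bigr)\,G_s(u,v),
$$
valid for all $s$, with $o=o(s)=\gamma(s)+\I r\tau(s)$ and $\bar o$ the complexified conjugate.

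The heart of the argument is to read the behaviour at the line at infinity off this family of identities. Comparing top homogeneous parts and letting the axis direction $\beta_0(s)$ vary over an interval forces the leading form of $F$ to be divisible by $uv$; geometrically the curve passes through the isotropic points, and no further constraint is produced there, which is why $(1:\pm\I:0)$ are excluded in the statement. I would then localise at a non-isotropic smooth intersection point $p$ of $\{\tilde f=0\}$ with $\{z=0\}$ and analyse the next homogeneous layers of the identity, using the rigid coupling $e^{2\I\beta_0(s)}=-\tau(s)^2$ between the centre $o(s)$ and the reflection axis, which is precisely the statement that $\Gamma$ and $\Gamma_{+2r}$ are the two envelopes of the circle family. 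The outcome I expect is a dichotomy: either the relevant coefficient of the leading form degenerates, which happens exactly when $\Gamma$ is a circle (its only infinite points being the isotropic ones), or the root of the leading form at $p$ is multiple, i.e. $\{\tilde f=0\}$ is tangent to $\{z=0\}$ at $p$. The same local computation, applied where the leading and sub-leading data would create a singular branch, excludes these and yields smoothness of $\{f=0\}$; the curve $\{f_{+2r}=0\}$ is treated identically, exchanging the roles of the inner and outer envelopes of the tangent-circle family.

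The step I expect to be the main obstacle is exactly this last infinite-point analysis: disentangling the $s$-coupled family of divisibility relations so as to isolate, at a single non-isotropic direction $p$, the multiplicity of $p$ as a root of the leading form, and cleanly separating the circular degeneration from the tangency alternative. Controlling the sub-leading terms, where the translation part of $R_s$ and the curvature of $\Gamma$ enter, while simultaneously ruling out singular points of $\{f=0\}$, is where the real work lies; everything upstream is the same bookkeeping as in the proof of Theorem \ref{main}.
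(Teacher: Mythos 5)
The paper gives no separate argument for Theorem \ref{outer}: it states that the proof is ``completely analogous'' to that of Theorem \ref{main}, i.e.\ one runs the machinery of Sections 3--5 on the outer map $T$. That machinery is very concrete: expand the invariance relation $F(P_-(\epsilon))=F(P_+(\epsilon))$ to third order in the reflection angle $\epsilon$, observe that the $\epsilon^3$ coefficient is a total derivative along the boundary curve, integrate to get $H(f)+\beta|\nabla f|^3=\mathrm{const}$ on $\{f=0\}$ (equations (\ref{rem3})--(\ref{rem5})), prove the constant is nonzero via the curvature bounds (\ref{bounds}) for parallel curves (Proposition \ref{prop1}), deduce smoothness in $\mathbb{C}^2$ because a singular point would annihilate both $H(f)$ and $\nabla f$, and finally homogenize to (\ref{hom}), where $z=0$ forces $(\tilde f)_x^2+(\tilde f)_y^2=0$ which, combined with Euler's relation, yields exactly the ``isotropic point or tangency'' dichotomy. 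Your first paragraph (identifying $T$ on each tangent Larmor circle with the reflection $R_s$ in the normal, and noting the argument is local near the two boundary curves so both orientation cases are covered) is a correct and useful reduction, consistent with what the paper intends.

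The gap is that after setting up the complexified divisibility relation $F-F\circ R_s=\bigl((u-o)(v-\bar o)-r^2\bigr)G_s$ you never extract from it the quantitative statement that actually drives the theorem. Everything from ``Comparing top homogeneous parts \dots forces the leading form of $F$ to be divisible by $uv$'' onward is asserted as an expected outcome, and you explicitly defer the smoothness claim and the tangency dichotomy to an unexecuted ``infinite-point analysis.'' That analysis is not routine bookkeeping: the entire geometric content of the theorem sits in the fact that the constant in (\ref{rem5}) is \emph{nonzero}, which in the paper comes from the curvature identity (\ref{k}) together with the standing hypothesis $\beta<k_{\min}$ (equivalently, the bounds (\ref{bounds}) saying the parallel curves never have curvature equal to $\beta$). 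Your proposal contains no substitute for this input; without it one cannot rule out singular points of $\{f=0\}$ in $\mathbb{C}^2$, nor obtain the relation $(\tilde f)_x^2+(\tilde f)_y^2=0$ on $\{z=0\}$ that separates isotropic points from tangencies. The divisibility framework you chose is closer in spirit to the angular-billiard method of the authors' earlier work and could plausibly be made to work, but as written the proposal is a plan whose decisive steps are missing, so it does not establish the statement.
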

\begin{corollary}
The outer magnetic billiard for ellipse is not algebraically
integrable.
\end{corollary}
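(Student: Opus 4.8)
The plan is to apply Theorem \ref{outer} in contrapositive form directly to the outer curve $\Gamma$, which for the ellipse is simply the degree two conic itself. The key observation is that, in contrast to the Birkhoff situation treated in the previous corollary, there is no need to pass to the eighth degree parallel curve $\Gamma_{+2r}$ or to locate affine complex singularities: the defining polynomial of $\Gamma$ is an honest smooth conic, and the obstruction will come entirely from its behaviour on the line at infinity. So first I would argue by contradiction: suppose the outer magnetic billiard for the ellipse $\{x^2/a^2+y^2/b^2=1\}$ with $0<b<a$ admits a non-constant polynomial integral $F$. Since this ellipse is not a circle, Theorem \ref{outer} forces the second alternative, namely that $f$ and $f_{+2r}$ are smooth in $\mathbb{C}^2$ and that every non-singular, non-isotropic intersection point of the projective curves $\{\tilde{f}=0\}$, $\{\tilde{f}_{+2r}=0\}$ with $\{z=0\}$ is a point of tangency. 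It suffices to violate this condition for $f$ alone.

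Next I would take $f=b^2x^2+a^2y^2-a^2b^2$, whose homogenization $\tilde{f}=b^2x^2+a^2y^2-a^2b^2z^2$ is a smooth conic, so the smoothness part of the alternative does hold and the burden falls on the tangency claim. Intersecting with the infinite line $\{z=0\}$ gives $b^2x^2+a^2y^2=0$, i.e. the two distinct points $(a:\pm i b:0)$. Because $a\neq b$ these are \emph{not} the isotropic points $(1:\pm i:0)=(a:\pm i a:0)$ — this is precisely the place where excluding the circle becomes essential — and both are smooth points of the conic. To check that the line at infinity meets $\{\tilde{f}=0\}$ transversally rather than tangentially, I would write down the tangent (polar) line to $\tilde{f}$ at a point $(x_0:y_0:0)$ of the conic, namely $b^2x_0x+a^2y_0y=0$; since $(x_0,y_0)\neq(0,0)$ this line can never coincide with $\{z=0\}$, so the infinite line is a genuine secant and both intersection points are transversal.

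This contradicts the requirement that such non-isotropic, non-singular intersection points be tangency points, so the second alternative of Theorem \ref{outer} is impossible; the dichotomy then forces $\Gamma$ to be a circle, against our hypothesis, and hence no non-constant polynomial integral can exist. The argument is almost immediate once Theorem \ref{outer} is available, and the only steps demanding genuine care — the \emph{hard part} of an otherwise short proof — are the two elementary conic verifications: that the two points at infinity are non-isotropic (which crucially uses $a\neq b$), and that the secant intersection there is transversal rather than tangent. Both reduce to the polar-line computation above, and neither requires the heavier machinery needed in the Birkhoff ellipse case.
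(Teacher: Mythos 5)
Your argument is correct and is essentially the paper's own proof: the paper likewise notes that the projectivized conic is smooth, meets the line at infinity transversally in the two points $(a:\pm ib:0)$, which are not isotropic since $a\neq b$, and then invokes Theorem \ref{outer}. You have merely supplied the explicit polar-line verification that the paper leaves as an assertion.
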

\begin{proof}
For the case of ellipse the curve $\{\tilde{f}=0\}$ is smooth
everywhere in $\mathbb{C}P^2$ and intersects transversally the
infinite line in two points away from the isotropic points. Thus
Theorem \ref{outer} implies non-existence of polynomial integral.
\end{proof}
 Exactly as in Corollary \ref{all} we have the following:
\begin{corollary}\label{allouter} For all but finitely many
values of the magnitude of magnetic field $\beta$, the Outer
magnetic billiard of $\Gamma$ is not algebraically integrable unless
$\Gamma$ is a circle.
\end{corollary}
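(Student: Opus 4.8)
The plan is to transcribe the proof of Corollary \ref{all} to the outer setting, the only change being that the relevant offset distance is $2r$ instead of $r$. First I would assume that $\Gamma$ is not a circle (otherwise there is nothing to prove) and suppose, for contradiction, that the outer magnetic billiard of $\Gamma$ admits a non-constant polynomial integral $F$. By Theorem \ref{outer}, for every value of $\beta$ for which such an $F$ exists the affine parallel curve $\{f_{+2r}=0\}$ is forced to be non-singular in $\mathbb{C}^2$. It therefore suffices to show that $\{f_{+2r}=0\}$ is in fact singular in $\mathbb{C}^2$ for all but finitely many $r$, since the fixed curve $\Gamma$ (and hence $f$) carries no $r$-dependence to exploit.

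The differential-geometric input is that a parallel curve of a non-circular convex curve is singular for a whole interval of offset distances. Since $\Gamma$ is convex and not a circle, its curvature $k$ is non-constant and assumes all values in a non-degenerate interval $(k_{\min},k_{\max})$, so its radius of curvature $1/k$ sweeps out the interval $(1/k_{\max},1/k_{\min})$. The offset curve $\Gamma_{+2r}$ acquires a cusp — a genuine singular point already over $\mathbb{R}$ — precisely when $2r$ equals $1/k$ at some point, equivalently when $\frac{1}{2r}\in(k_{\min},k_{\max})$. This produces a nonempty open interval of parameters $r$ for which $\Gamma_{+2r}$ is singular in $\mathbb{R}^2$, and hence a fortiori singular in $\mathbb{C}^2$. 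Exactly as in Corollary \ref{all}, this interval need not lie inside the physical range of $\beta$; it serves only to seed the algebraic persistence argument below.

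Finally I would run the same elimination-theoretic argument. The defining polynomial $f_{+2r}$ depends polynomially on the offset parameter, so it may be regarded as a polynomial in the three variables $x,y,r$, and the system
$$
\partial_x f_{+2r}=\partial_y f_{+2r}=f_{+2r}=0
$$
cuts out an algebraic subvariety $V\subset\mathbb{C}^3$. Its image under the projection onto the $r$-line is a constructible set by Chevalley's theorem, and by the previous paragraph it contains an entire real interval; since an infinite constructible subset of a line is cofinite, this image must consist of all but finitely many values of $r$. Consequently $\{f_{+2r}=0\}$ is singular in $\mathbb{C}^2$ for all but finitely many $r$, contradicting the smoothness forced by Theorem \ref{outer}. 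Hence, for all but finitely many $\beta=1/r$, the outer magnetic billiard of a non-circular $\Gamma$ admits no polynomial integral, which is the assertion.

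The step I expect to require the most care is the differential-geometry claim together with the verification that $f_{+2r}$ is genuinely polynomial in $r$ of controlled degree: one must know that the cusps of $\Gamma_{+2r}$ arising for $\frac{1}{2r}\in(k_{\min},k_{\max})$ really register as common zeros of $f_{+2r}$ and its partials, rather than being lost to a degree drop or a change of irreducible component as $r$ varies, so that the projection of $V$ to the $r$-line is honestly constructible. These are precisely the technical points already settled in the proof of Corollary \ref{all}, and passing from $r$ to $2r$ affects none of them.
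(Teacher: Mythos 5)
Your proposal is correct and is essentially the paper's intended argument: the authors explicitly state the proof is obtained "exactly as in Corollary \ref{all}," i.e.\ by combining Theorem \ref{outer} with the fact that $f_{+2r}$ is polynomial in $x,y,r$, that $\Gamma_{+2r}$ has real cusps for $\frac{1}{2r}$ in the curvature range of a non-circular $\Gamma$, and that the projection of the singularity variety to the $r$-line is therefore cofinite. Your invocation of Chevalley's theorem to justify constructibility of that projection is, if anything, slightly more careful than the paper's phrasing.
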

The rest of the paper is organized as follows. In Section \ref{two}
we prove Theorem \ref{harmonics} and Lemma \ref{lemma}. In Section
\ref{three} we deal with the boundary values of the integral $F$. In
Section \ref{remarkable} we derive a remarkable equation on $F$. In
Section \ref{mainproof} we prove Theorem \ref{main}. The proofs of
Theorem \ref{outer} and Corollary \ref{allouter} are completely
analogous and therefore are omitted.

%%%%%%%%%%%%%%%%%%%%%%%%%%%%%%%%%%%%%%%%%%%%%%%%%%%%%%%%%%%%%%%
\section*{Acknowledgements}The first named author is grateful to
Eugene Shustin and Misha Sodin for very useful consultations. We
thank Sergei Tabachnikov for communicating to us elegant proof of
Lemma \ref{lemma}.

%%%%%%%%%%%%%%%%%%%%%%%%%%%%%%%%%%%%

\section{\bf Proof of Theorem \ref{harmonics}}\label{two}
We shall use the following lemma.
\begin{lemma}\label{lemma}
Let $F$ be a $C^{\infty}$ function $F:A\rightarrow \mathbb{R}$ where
$$A=\{(x,y): (r-\delta)^2\leq x^2+y^2\leq (r+\delta)^2\},$$
is the annulus in $\mathbb{R}^2$. Suppose that the function $F$
being restricted to any circle of radius $r$ lying in $A$ is a
trigonometric polynomial of degree at most $N.$ It then follows that
$F$ is a polynomial in $x$ and $y$ of degree at most $2N.$
\end{lemma}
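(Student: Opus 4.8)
The plan is to pass from $F$ to the family of its Fourier coefficients along the circles of radius $r$, to prove that each of these coefficients is a genuine polynomial in the centre, and finally to reconstruct $F$ from them as a polynomial of degree $\le 2N$. First I would record the elementary fact that a circle of radius $r$ lies in $A$ exactly when its centre $c$ (written as a complex number) lies in the disc $\{|c|\le\delta\}$. For $c$ in the open disc $D=\{|c|<\delta\}$ set
\[ \Phi_m(c)=\int_0^{2\pi}F(c+re^{\I\theta})e^{-\I m\theta}\,d\theta . \]
Since $F\in C^\infty$ each $\Phi_m$ is smooth in $c$, and the hypothesis that $F$ restricted to every such circle is a trigonometric polynomial of degree $\le N$ is precisely the statement that $\Phi_m\equiv0$ on $D$ for $|m|>N$. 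Fourier inversion then gives the reconstruction identity $2\pi F(c+re^{\I\theta})=\sum_{m=-N}^{N}\Phi_m(c)\,e^{\I m\theta}$, valid for $c\in D$ and all $\theta$.

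Next I would differentiate under the integral sign. Writing $\partial=\partial_z$ and $\bar\partial=\partial_{\bar z}$, one checks that $\partial_c\Phi_m$ and $\partial_{\bar c}\Phi_m$ are the $m$-th Fourier coefficients of $\partial F$ and $\bar\partial F$ along the same circles; integrating by parts in $\theta$ and using $\partial_\theta F=\I re^{\I\theta}\partial F-\I re^{-\I\theta}\bar\partial F$ yields the recursion
\[ m\,\Phi_m=r\,\partial_c\Phi_{m-1}-r\,\partial_{\bar c}\Phi_{m+1}. \]
Feeding $\Phi_{N+1}=\Phi_{N+2}=0$ into this relation gives $\partial_c\Phi_N=0$, and a downward induction on $m$ (applying $\partial_c^{\,N-m}$ to the recursion at index $m+1$) yields $\partial_c^{\,N-m+1}\Phi_m=0$ for all $0\le m\le N$; thus $\Phi_m$ is a polynomial in $c$ of degree $\le N-m$ with anti-holomorphic coefficients. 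Conjugating and using that $F$ is real, so $\Phi_{-m}=\overline{\Phi_m}$, gives the mirror statement $\partial_{\bar c}^{\,N+m+1}\Phi_m=0$ for $-N\le m\le 0$.

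The heart of the matter, and the step I expect to be the main obstacle, is to upgrade these one-sided bounds to genuine polynomiality: so far $\Phi_N$, say, is only known to be an arbitrary smooth anti-holomorphic function of $\bar c$, which need not be a polynomial. To close this gap I would run a second, upward induction. Rewriting the recursion as $r\,\partial_{\bar c}\Phi_{m+1}=r\,\partial_c\Phi_{m-1}-m\,\Phi_m$ and applying $\partial_{\bar c}^{\,N+m+1}$, the already established bounds $\partial_{\bar c}^{\,N+m}\Phi_{m-1}=0$ and $\partial_{\bar c}^{\,N+m+1}\Phi_m=0$ force $\partial_{\bar c}^{\,N+m+2}\Phi_{m+1}=0$. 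Starting from the conjugate bounds for $m\le0$, this propagates to give $\partial_{\bar c}^{\,N+m+1}\Phi_m=0$ for every $m$, and conjugating once more gives $\partial_c^{\,N-m+1}\Phi_m=0$ for every $m$. A smooth function on $D$ annihilated by a power of $\partial_c$ and by a power of $\partial_{\bar c}$ is necessarily a polynomial in $c,\bar c$; hence each $\Phi_m$ is a polynomial with $\deg_c\le N-m$ and $\deg_{\bar c}\le N+m$, and in particular of total degree $\le 2N$.

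Finally I would reconstruct $F$. Fixing $\theta_0$ and substituting $c=z-re^{\I\theta_0}$, $\bar c=\bar z-re^{-\I\theta_0}$ into the reconstruction identity expresses $2\pi F(z)$, on the disc $\{|z-re^{\I\theta_0}|<\delta\}$, as $\sum_{m}e^{\I m\theta_0}\,\Phi_m\!\left(z-re^{\I\theta_0},\bar z-re^{-\I\theta_0}\right)$: an affine substitution into polynomials of total degree $\le 2N$, hence a polynomial in $(z,\bar z)$ of degree $\le 2N$. As $\theta_0$ varies these discs cover the open annulus $r-\delta<|z|<r+\delta$, and two polynomials of degree $\le 2N$ agreeing on an open set coincide; therefore $F$ equals a single polynomial of degree $\le 2N$ on the interior of $A$, and by continuity on all of $A$, which is the assertion.
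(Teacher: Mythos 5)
Your argument is correct, but it follows a genuinely different route from the paper's. The paper proceeds by induction on $N$: it subtracts a degree-$N$ polynomial $F_0$ agreeing with $F$ on the core circle, writes $F-F_0=(x^2+y^2-r^2)G$ via Hadamard's lemma in the coordinates $(x^2+y^2-r^2,\arg(x+iy))$, and shows that $G$ satisfies the hypothesis with $N-1$ because the Fourier coefficients of $G$ on an off-centre circle obey the three-term recurrence $rg_{k+1}+ag_k+rg_{k-1}=0$ for $|k|>N$, whose characteristic roots have modulus one, so a nonzero tail would contradict the decay of the Fourier coefficients of a continuous function. You instead keep $F$ itself and study its Fourier coefficients $\Phi_m(c)$ as functions of the centre, derive the differential recursion $m\Phi_m=r\,\partial_c\Phi_{m-1}-r\,\partial_{\bar c}\Phi_{m+1}$, and finish with the structure theory of functions annihilated by powers of $\partial_c$ and $\partial_{\bar c}$. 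All the steps check out: the recursion is right, the inductions close (in fact your downward induction already yields $\partial_c^{\,N-m+1}\Phi_m=0$ for \emph{all} $m\le N$, not just $m\ge 0$, so after one conjugation the second, upward induction is redundant), and the one fact you leave unproved --- that $\partial_{\bar c}^{\,b}u=0$ on a disc forces $u=\sum_{j<b}\bar c^{\,j}h_j(c)$ with $h_j$ holomorphic, whence $\partial_c^{\,a}u=0$ makes each $h_j$ a polynomial of degree less than $a$ --- is standard and takes a few lines to supply. Your route gives sharper output in a single pass, namely the bidegree bounds $\deg_c\Phi_m\le N-m$ and $\deg_{\bar c}\Phi_m\le N+m$ for each coefficient; the paper's route is more elementary, using nothing beyond real Fourier series and the vanishing of Fourier tails of continuous functions, at the cost of the induction on $N$ and the Hadamard-lemma factorisation.
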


\begin{proof} (after S. Tabachnikov). We shall say that $F$ has
property $P_N$ if the restriction of $F$ to any circle of radius $r$
lying in $A$ is a trigonometric polynomial of degree at most $N$.
The proof of Lemma goes by induction on the degree $N$.

1) For $N=0$, Lemma obviously holds since if $F$ has property $P_0$
then $F$ is a constant on any circle of radius $r$ and hence must be
a constant on the whole $A$, because any two points of $A$ can be
connected by a union of finite number circular arcs of radius $r$.

2) Assume now that any function satisfying property $P_{N-1}$  is a
polynomial of degree at most $2(N-1).$

Let $F$ be any smooth function on $A$ of property $P_N$. Denote by
$C_0$ be the core circle of $A$, i.e. $C_0=\{x^2+y^2=r^2\},$ and let
$F_0$ be the polynomial in $(x,y)$ of degree $N$ satisfying
$F|_{C_0}=F_0|_{C_0}.$ Then,
 one can find a $C^{\infty}$ function
$G:A\rightarrow\mathbb{R}$ so that
\begin{equation}\label{F0}
F(x,y)-F_0(x,y)=(x^2+y^2-r^2)G(x,y), \quad \forall (x,y)\in A.
\end{equation}
(This can be proven with the help of "polar" coordinates on $A$
$$(x,y)\rightarrow (u,v);\ u=x^2+y^2-r^2,\  v=\arg(x+iy),$$
applying Hadamard' lemma to the function $F-F_0$ with respect to the
variable $u$ and $v$ being the parameter.) Let us show now that $G$
has property $P_{N-1}$. Then by induction we will have that $G$ is a
polynomial of degree $2(N-1)$ and thus by (\ref{F0}), $F$ is a
polynomial of degree $2N$ at most. We need to show that the function
$g:=G|_C$ is a trigonometric polynomial of degree $(N-1)$ or less,
for any circle $C$ of radius $r$ in $A$ . With no loss of generality
we may assume that the circle $C$ is centered on the $x-$axes
(otherwise apply suitable rotation of the plane). Then
$$C=\{(x,y)\in A: (x-a)^2+y^2=r^2\}, \quad |a|<\delta.$$
Substituting $x=a+r\cos t, y=r\sin t$ into (\ref{F0}) we have
$$
(F-F_0)|_C=(a^2+2ar\cos t)\cdot g.
$$
 Writing the left and the right hand side
in Fourier series we get
$$
\sum_{-\infty}^{+\infty}f_ke^{ikt}=
a(a+re^{it}+re^{-it})\sum_{-\infty}^{+\infty}g_ke^{ikt},
$$ where $f_k$ are Fourier coefficients of $(F-F_0)|_C$.
Moreover, we have: $$f_k=0,\quad  |k|>N,$$ since both $F,F_0$ have
property $P_N.$ Thus we obtain linear recurrence relation for the
coefficients $g_k$:
$$
rg_{k+1}+ag_k+rg_{k-1}=0, \quad  |k|>N.
$$
The characteristic polynomial of this difference equation
$$\lambda^2+\frac{a}{r}\lambda+1=0$$ has two complex conjugate roots
$\lambda_{1,2}=e^{\pm i\alpha}$ and therefore we get the formula:
$$
g_{N+l}=c_1e^{il\alpha}+c_2e^{-il\alpha}, \quad  l\geq2,
$$ where
$$c_1+c_2=g_N,\quad c_1e^{i\alpha}+c_2e^{-i\alpha}=g_{N+1}.
$$
It is obvious now that if at least one of the coefficients $g_N$ or
$g_{N+1}$ does not vanish, then at least one of the constants $c_1,
c_2$ does not vanish and therefore the sequence $\{g_{N+l}\}$ does
not converge to $0$ when $l\rightarrow +\infty$. This contradicts
the continuity of $g$. Therefore both $g_N,g_{N+1}$ must vanish and
so $g$ is a trigonometric polynomial of degree at most $(N-1)$,
proving that $G$ has property $P_{N-1}$. This completes the proof.
\end{proof}

Next we give the proof of Theorem \ref{harmonics}.
\begin{proof}

Take any circle of radius $r$ lying in $\Omega_r$ and let $A$ be the
annulus which is the closure of its $\delta-$neighborhood.  Using
the convolution with a $C^{\infty}$ mollifier $\rho_{\epsilon}$
compactly supported in a small disc of radius $\epsilon$, we get a
$C^{\infty}$ function $F_{\epsilon}:$
$$
F_\epsilon(z):=\int \rho_{\epsilon}(z-\xi)F(\xi)d\xi=\int
F(z-\xi)\rho_{\epsilon}(\xi)d\xi,\quad z=(x,y).
$$
It is easy to see, that if $F$ has property $P_N$ then also
$F_{\epsilon}$ has property $P_N$ on the chosen annulus $A$ for all
$\epsilon$ small enough, $0<\epsilon<\epsilon_0$. Then by Lemma
\ref{lemma}, $F_{\epsilon}$ must be a polynomial on $A$ of degree at
most $2N$, for $0<\epsilon<\epsilon_0$. Recall, that $F_{\epsilon}$
converge to $F$ uniformly on $A$ as $\epsilon\rightarrow 0$.
Therefore, since the space of Polynomials of degree at most $2N$ is
finite-dimensional it then follows that $F$ is also a polynomial on
$A$ of degree at most $2N$. The set $\Omega_r$ can be covered by
annuli like $A$, therefore $F$ must be a polynomial of degree at
most $2N$ on the whole $\Omega_r.$ This completes the of Theorem
\ref{harmonics}.

\end{proof}

\section{\bf Boundary values of the Integral}
\label{three}We prove now Proposition \ref{prop}

\begin{proof} Take a point $Q$ on a smooth piece $\gamma$
of the boundary $\partial\Omega.$  Let $\tau(Q)$ be a positive unite
tangent vector to $\gamma$. Let $C_-,C_+$ be the incoming and
outgoing circles with the unite tangent vectors $v_-$ and $v_+$ at
the impact point $Q$. We are interested in the two cases when the
reflection angle between $\tau$ and $v_-$ or between $v_-$ and
$-\tau$ is close to zero. These two possibilities correspond (see
Fig. 5, 6) to the following cases:

$$(a)\quad\quad v_-=R_{-\epsilon}\tau,\quad\ \quad v_+=R_{\epsilon}\tau;$$
$$(b)\quad v_-=R_{\epsilon}(-\tau),\quad v_+=R_{-\epsilon}(-\tau),$$
where $R_{\epsilon}$ is the counterclockwise rotation of the plane
on a small angle $\epsilon$. On Figures 5,6  the arcs $(q;Q)$ and
$(Q;p)$ are the arcs of the circles $C_-$ and $C_+$ respectively.

We define
$$ P_-(\epsilon)={\mathcal L}(Q,v_-)=Q+rJ(v_-);
\quad P_+(\epsilon)={\mathcal L}(Q,v_+)=Q+rJ(v_+).
$$
In the case (a) we have
\begin{equation}\label{a}
P_-(\epsilon)=Q+rJ(R_{-\epsilon}\tau);\quad
P_+(\epsilon)=Q+rJ(R_{\epsilon}\tau). \end{equation}As for the case
(b): \begin{equation}\label{b}
P_-(\epsilon)=Q-rJ(R_{\epsilon}\tau);\quad
P_+(\epsilon)=Q-rJ(R_{-\epsilon}\tau).
\end{equation}
On the Figures 5, 6 we abbreviate

$P_{-}:=P_-(\epsilon),\ P_+:=P_+(\epsilon),P_0:=P_-(0)=P_+(0)$.

Notice that in case (a) the middle point of the short arc connecting
points $P_-(\epsilon)$ and $P_+(\epsilon)$ is the point
$P_0=P=(Q+rJ\tau)\in\gamma_{+r}$ and for the case (b) the middle
point is $P_0=(Q-rJ\tau)\in\gamma_{-r}$ (see Fig. 5, 6).

The condition 2. of Definition \ref{def} reads in terms of $F$:
\begin{equation}\label{P}
F(P_-(\epsilon))=F(P_+(\epsilon))
\end{equation}
Differentiating this equality with respect to $\epsilon$ at
$\epsilon=0$  and using the fact that
$\frac{d}{d\epsilon}|_{\epsilon=0}R_\epsilon =J$ we compute in the
case (a):
$$\frac{d}{d\epsilon}|_{\epsilon=0}F(P_-(\epsilon))=dF|_{P_0}(-r\cdot\tau),$$
$$\frac{d}{d\epsilon}|_{\epsilon=0}F(P_+(\epsilon))=dF|_{P_0}(r\cdot\tau).
$$
Here $dF|_{P_0}(w)$ is the differential of the function $F$ at the
point $P_0$ applied to the vector $w$. Thus (\ref{P}) implies that
$$dF|_{P_0}(\tau)=0,$$ where in the last formula $\tau$ should be
understand as the unite tangent vector to $\gamma_{+r}$ at the point
$P_0$ proving the claim in case (a). The case (b) is completely
analogous. This proves that
$$F|_{\gamma_{\pm r}}=const.$$
\end{proof}
%%%%%%%%%%%%%%%%%%%%%%%%%%%%%%%%%%%%%%%%%%%%%%%%%%%%%%%%

\section{\bf Remarkable equation}\label{remarkable}
For any function $F$ which is invariant under $\mathcal{M}$ we can
rewrite equation (\ref{P}) at any non-critical point
$P_0\in\gamma_{\pm r}$ as follows.

Denote by $n=J\tau$ the unite normal vector. From formulas (\ref{a})
we have for the case (a):
\begin{equation}\label{a1}
P_\pm(\epsilon)=Q+rJ(R_{\pm\epsilon}\tau)=Q+rR_{\pm\epsilon}n=
\end{equation}$$
=Q+rn-r(n-R_{\pm\epsilon}n) =P_0-r(I-R_{\pm\epsilon})n.
$$
Analogously for the case (b) we get from (\ref{b}):
\begin{equation}\label{b1}
P_\mp(\epsilon)=Q-rJ(R_{\pm\epsilon}\tau)=Q-rR_{\pm\epsilon}n=
\end{equation}
$$
=Q-rn+r(n-R_{\pm\epsilon}n) =P_0+r(I-R_{\pm\epsilon})n.
$$
Notice that for the unite normal to the curves $\gamma_{+ r}$ and
$\gamma_{- r}$ at $P_0$ one has $n=\pm\frac{\nabla F}{|\nabla F|}$
where the sign is irrelevant since we can change the sign of $F$.
Using this remark we can rewrite the equation (\ref{P}) with the
help of (\ref{a1}),(\ref{b1}) in both cases (a) and (b)
simultaneously:

\begin{equation}\label{rem}
F\left(P_0+r(I-R_{\epsilon})\left(\frac{\nabla F}{|\nabla
F|}\right)(P_0)\right)-
\end{equation}
$$
F\left(P_0+r(I-R_{-\epsilon})\left(\frac{\nabla F}{|\nabla
F|}\right)(P_0)\right)=0, \quad P_0\in \gamma_{\pm r}.
$$
This can be written for $P_0=(x,y)\in\gamma_{\pm r}$ explicitly:

\begin{equation}\label{rem1} F\left(
x+r\frac{F_x(1-\cos\epsilon)+F_y\sin\epsilon}{|\nabla F|};
y+r\frac{F_y(1-\cos\epsilon)-F_x\sin\epsilon}{|\nabla F|} \right)-
\end{equation}
$$F\left(
x+r\frac{F_x(1-\cos\epsilon)-F_y\sin\epsilon}{|\nabla F|};
y+r\frac{F_y(1-\cos\epsilon)+F_x\sin\epsilon}{|\nabla F|} \right)=0.
$$
 The
next step is to expand equation (\ref{rem1}) in power series in
$\epsilon.$ The coefficient at $\epsilon^3$ reads:
\begin{equation}\label{rem2}
(F_{xxx}F_y^3-3F_{xxy}F_y^2F_x+3F_{xyy}F_yF_x^2-F_{yyy}F_x^3)+
\end{equation}
$$
3\beta(F_x^2+F_y^2)^{\frac{1}{2}}(F_{xx}F_xF_y+F_{xy}(F_y^2-F_x^2)-F_{yy}F_xF_y)=0,\quad
(x,y)\in\gamma_{\pm r}.
$$
Remarkably, the left-hand side of (\ref{rem2}) is a complete
derivative along the tangent vector field $v$ to $\gamma_{\pm r}$,
$v=(F_y,-F_x),$ of the following expression which therefore must be
constant:
\begin{equation}\label{rem3}
H(F)+\beta|\nabla F|^3=const, \quad (x,y)\in\gamma_{\pm r},
\end{equation}
where we used the notation
$$
H(F):=F_{xx}F_y^2-2F_{xy}F_xF_y+F_{yy}F_x^2.
$$

Let us remark that the equation (\ref{rem1}) and therefore also
(\ref{rem3}) is valid only for those points, where $\nabla F$ does
not vanish. If the polynomial $F$ is reducible this never happens.
Therefore we proceed as follows. Let us denote by $f_{+r}$
irreducible defining polynomial of $\gamma_{+ r}$, the proof for the
curve $\gamma_{-r}$ is completely the same. Then we have:
$$ F=f_{+r}^k\cdot g,
$$ for some integer $k\geq 1$, and polynomial $g$ not
vanishing on $\gamma_{+ r}$ identically. Given an arc of $\gamma_{+
r}$ where $g$ does not vanish we may assume it is positive on the
arc (otherwise we change the sign of $F$). Moreover, since $f_{+r}$
is irreducible polynomial, then we may assume that $\nabla f_{+r}$
does not vanish on the arc. Therefore the equation (\ref{rem3}) can
be derived in the same manner for the function
$F^{\frac{1}{k}}=f_{+r}\cdot g^{\frac{1}{k}}$ which obviously is
invariant under the map $\mathcal{M}$ exactly as $F$ is.
 Thus we
have
\begin{equation}\label{rem4}
H(f_{+r}\cdot g^{\frac{1}{k}})+\beta|\nabla (f_{+r}\cdot
g^{\frac{1}{k}})|^3=const, \quad (x,y)\in\{f_{+r}=0\}.
\end{equation}
Using the identities which are valid for all $(x,y)\in\{f_{+r}=0\}$
$$
H(f_{+r}\cdot g^{\frac{1}{k}})=g^{\frac{3}{k}}H(f_{+r}),\quad \nabla
(f_{+r}\cdot g^{\frac{1}{k}})=g^{\frac{1}{k}}\nabla (f_{+r}),
$$
we obtain from (\ref{rem4}):
\begin{equation}\label{rem44}
g^{\frac{3}{k}}(H(f_{+r})+\beta|\nabla f_{+r}|^3)=const,
\quad (x,y)\in\gamma_{+r}.
\end{equation}
Raising to the power $k$ back we get:
\begin{equation}\label{rem5}
g^3(H(f_{+r})+\beta|\nabla f_{+r}|^3)^k=const, \quad
(x,y)\in\gamma_{+r}.
\end{equation}
Next we claim the following
\begin{proposition}\label{prop1}
The constant in equation (\ref{rem5})  cannot be 0.
\end{proposition}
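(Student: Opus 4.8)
The plan is to argue by contradiction, exploiting that the ratio $H(f_{+r})/\abs{\nabla f_{+r}}^3$ is precisely the curvature of the level curve $\{f_{+r}=0\}$. Suppose the constant in (\ref{rem5}) were $0$. Since the left-hand side of (\ref{rem5}) is constant along $\gamma_{+r}$ and equals $0$, at every point of $\gamma_{+r}$ one must have either $g=0$ or $H(f_{+r})+\beta\abs{\nabla f_{+r}}^3=0$. First I would pass to the arc on which $g$ was arranged to be strictly positive. There $g^3\neq0$, so (\ref{rem5}) forces $(H(f_{+r})+\beta\abs{\nabla f_{+r}}^3)^k=0$, whence the base
\[
H(f_{+r})+\beta\abs{\nabla f_{+r}}^3=0
\]
vanishes at every point of that arc.

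Next I would evaluate this at a smooth point $P_0$ of the arc, i.e.\ one with $\nabla f_{+r}(P_0)\neq0$. At such a point the classical expression for the curvature of an implicitly defined plane curve gives
\[
\frac{\abs{H(f_{+r})}}{\abs{\nabla f_{+r}}^3}=k_{+r},
\]
the curvature of $\gamma_{+r}$ at $P_0$. The vanishing above reads $H(f_{+r})=-\beta\abs{\nabla f_{+r}}^3$; taking absolute values and using $\beta>0$ and $\abs{\nabla f_{+r}}\neq0$ yields $\abs{H(f_{+r})}=\beta\abs{\nabla f_{+r}}^3$, i.e.\ $k_{+r}=\beta$. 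This contradicts the bound (\ref{bounds}), which for the inner parallel curve states $k_{+r}>\beta$. Hence the constant in (\ref{rem5}) cannot be $0$. Passing to absolute values is exactly what lets me bypass any sign convention in the curvature formula; only the strict inequality $k_{+r}\neq\beta$ is used, so the argument for $\gamma_{-r}$ is identical, now invoking $0<k_{-r}<\beta$ from (\ref{bounds}).

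The one point needing justification is the existence of an admissible $P_0$ carrying both $g(P_0)>0$ and $\nabla f_{+r}(P_0)\neq0$. This is where irreducibility of $f_{+r}$ enters: the locus $\nabla f_{+r}=0$ meets the curve only at its singular points, which are finite in number for an irreducible plane curve, whereas $g$ is positive on an entire arc. Thus a single admissible point already produces the contradiction, because (\ref{rem5}) asserts the same constant at all points of $\gamma_{+r}$. I expect no genuine obstacle here; the only care required is the correct matching of $\abs{H(f_{+r})}/\abs{\nabla f_{+r}}^3$ with the curvature $k_{+r}$ computed earlier in the paper, after which the strict curvature bound (\ref{bounds}) does all the work.
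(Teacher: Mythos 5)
Your proof is correct and follows essentially the same route as the paper's: evaluate at a point of $\gamma_{+r}$ where $g\neq 0$ and $\nabla f_{+r}\neq 0$, identify $H(f_{+r})/\abs{\nabla f_{+r}}^3$ with $\pm k_{+r}$ via the implicit curvature formula, and contradict the strict bound $k_{+r}>\beta$ (respectively $k_{-r}<\beta$) from (\ref{bounds}). Your extra care in locating an admissible point is a minor tightening of the same argument, not a different approach.
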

\begin{proof}
Recall the formulas for the curvature $k$ of the curve defined
implicitly by $\{f_{+r}=0\}$:
\begin{equation}\label{k}
\textrm{div} \left(\frac{\nabla f_{+r}}{|\nabla f_{+r}|}
\right)=\frac{H(f_{+r})}{|\nabla f_{+r}|^3}=\pm k_{+r}.
\end{equation}

Now we take any point on $\gamma_{+r}$ and substitute into
(\ref{rem5}). This gives that the constant must be non-zero. Indeed,
if the $const$ is zero, then $$ \frac{H(f_{+r})}{|\nabla
f_{+r}|^3}=-\beta.
$$
Then by formulas (\ref{k}) we have
$$
k_{+r}=\pm\beta.
$$
But this is not possible, because we have the bounds on the
curvature of parallel curves (\ref{bounds}).
\end{proof}
%%%%%%%%%%%%%%%%%%%%%%%%%%%%%%%%%%%%%%%%%%

\section {\bf Proof of the Main Theorem \ref{main}}\label{mainproof}
In this section we finish the proof of Theorem \ref{main}. We start
with the following:
\begin{theorem} Suppose that magnetic billiard in $\Omega$ admits a
non-constant polynomial integral $\Phi$. If at least one piece of
the boundary is a circular arc, then $\partial\Omega$ is a circle.
\end{theorem}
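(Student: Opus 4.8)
The plan is to show that the integral $F$ is forced to be a \emph{radial} polynomial — a polynomial in $x^2+y^2$ about the centre of the circular arc — and then to read the global shape of $\partial\Omega$ off the level sets of such a polynomial. First I would place the centre $O$ of the circular piece $\gamma$ at the origin, so that $\gamma$ is an arc of $C_\rho=\{x^2+y^2=\rho^2\}$ with $\rho=1/k<r$. From the curvature formulas preceding (\ref{bounds}) the parallel curves $\gamma_{+r},\gamma_{-r}$ are arcs of the concentric circles of radii $r-\rho$ and $r+\rho$, and the part of $\Omega_r$ lying over $\gamma$ is the annular sector $(r-\rho)^2\le x^2+y^2\le(r+\rho)^2$, foliated by the circles $\{x^2+y^2=d^2\}$.

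Next I would introduce the auxiliary magnetic billiard $\mathcal M_\rho$ in the full disc bounded by $C_\rho$. Writing a Larmor centre as $P=Q+rJv$ and using the reflection law $v_+=v_--2\langle v_-,n\rangle n$ at an impact point $Q\in C_\rho$ with radial unit normal $n=Q/\rho$, one computes $\langle Q,J(v_+-v_-)\rangle=0$, hence $|OP_+|=|OP_-|$; thus $\mathcal M_\rho$ preserves $J_0:=x^2+y^2$. Since the magnetic field and $C_\rho$ are rotationally symmetric about $O$, the map $\mathcal M_\rho$ commutes with the rotations $R_\theta$ and therefore acts on each circle $\{x^2+y^2=d^2\}$ as a rotation by an angle $\alpha(d)$, $d\in(r-\rho,r+\rho)$. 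The decisive analytic input, which I expect to be the \textbf{main obstacle}, is that $\alpha(d)$ is \emph{non-constant} (the magnetic analogue of the classical twist property): I would verify this by a direct computation of the reflection geometry, e.g. comparing the co-oriented grazing limit $d\to r-\rho$, where $\alpha\to0$, with the counter-oriented limit $d\to r+\rho$. Without non-constancy the argument would allow an $F$ invariant only under a finite cyclic rotation group, and radial symmetry could fail.

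With the twist in hand I would deduce radial symmetry as follows. For every Larmor circle whose $\mathcal M_\rho$-reflection point happens to lie on the arc $\gamma$ one has $\mathcal M_\rho=\mathcal M$, so the invariance $F\circ\mathcal M=F$ gives $F\circ R_{\alpha(d)}=F$ on an arc of $\{x^2+y^2=d^2\}$. Because $F$ is a polynomial, its restriction to that circle is a trigonometric polynomial, and an identity of trigonometric polynomials holding on an arc holds on the whole circle; hence $F\circ R_{\alpha(d)}=F$ on all of $\{x^2+y^2=d^2\}$ for $d$ in an open range. Expanding in angular Fourier modes $\hat F_m(d)$ one gets $\hat F_m(d)\,(e^{im\alpha(d)}-1)=0$; since $\alpha$ is non-constant (and real-analytic) the equation $e^{im\alpha(d)}=1$ holds only on a discrete set of $d$ for each $m\neq0$, while $\hat F_m(d)$ depends polynomially on $d$, forcing $\hat F_m\equiv0$ for all $m\neq0$. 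Thus $F$ is radial on the sector and, being a polynomial, $F=\phi(x^2+y^2)$ on all of $\mathbb{R}^2$.

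Finally I would harvest the global consequence. Every level set $\{F=c\}$ is now a finite union of circles centred at $O$. By Proposition~\ref{prop} the parallel curve of each smooth piece $\gamma'$ of $\partial\Omega$ lies in a single level set of $F$, so $\gamma'_{+r}$, and therefore $\gamma'$ itself, is an arc of a circle centred at $O$. Distinct concentric circles are disjoint, so a simple closed convex curve assembled from such arcs cannot pass from one radius to another: all pieces lie on one circle and join into it without corners. Hence $\partial\Omega$ is that circle, which is exactly the assertion of the theorem.
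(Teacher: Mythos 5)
Your proposal is correct and follows essentially the same strategy as the paper: introduce the auxiliary magnetic billiard in the full circle supporting the arc, use the fact that it acts on each concentric circle of Larmor centres as a rotation by a non-constant angle $\alpha$, deduce that $F$ must be radial about the centre of that circle, and then conclude via Proposition \ref{prop} that every other piece of $\partial\Omega$ must lie on the same circle. The only tactical difference is in how radiality is established: the paper shows that $\Delta=F(P_-)-F(P_+)$ is real-analytic on the annulus and vanishes on the open set of orbits reflecting off the circular arc, hence identically, and then invokes the density of invariant circles with irrational rotation number, whereas you extend the arc identity by trigonometric-polynomial rigidity and kill the nonzero Fourier modes using the non-constancy of $\alpha$ directly; both are sound, and the explicit formula $\alpha(\rho)=2\arccos\bigl(\tfrac{\rho^2+d^2-r^2}{2\rho d}\bigr)$ disposes of the non-constancy issue you rightly flagged as the main point to check.
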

\begin{proof}
Let us recall that for circular magnetic billiard there exists a
simple integral given by Example \ref{example}. It is very
convenient to pass from $\Phi$ to $F$ defined as above:
$$
F\circ{\mathcal L}=\Phi.
$$
Let us recall that the mapping ${\mathcal L}$ maps any unite vector
$(x,v)$ to the center of the Larmor circle passing through $x$ in
the direction of $v.$ So that if $(Q,v_-)$ is reflected into
$(Q,v_+)$ then the points $P_{\pm}={\mathcal L}(v_{\pm})$ lie on the
circle of radius $r$ which is tangent to the curves $\gamma_{+r}$
and $\gamma_{-r}$ at the points ${\mathcal L}(Q,\tau)$ and
${\mathcal L}(Q,-\tau)$ which are the middle points of the two arcs
of the circle connecting $P_-,P_+$ (see Figures 5,6).

\vskip25mm

\begin{picture}(170,100)(-100,-55)

\qbezier(10,30)(12,88)(70,90)
\qbezier(70,90)(128,88)(130,30)
\qbezier(130,30)(128,-28)(70,-30)
\qbezier(70,-30)(12,-28)(10,30)

\put(70,30){\line(1,2){27}}
\put(70,30){\line(-1,2){27}}
\put(70,30){\line(-1,-2){5}}
\put(70,30){\line(1,-2){5}}

\put(64,19){\circle*{2}}
\put(75,19){\circle*{2}}

\qbezier(57,57)(70,63)(84,57)
\put(57,57){\circle*{2}}
\put(84,57){\circle*{2}}

\put(65,65){\shortstack{$\gamma^{_{(1)}}$}}

\put(43,85){\circle*{2}}
\put(97,85){\circle*{2}}

\put(63,97){\shortstack{$\gamma^{_{(1)}}_{_{-r}}$}}

\put(84,57){\circle*{2}}

\qbezier(84,57)(92,50)(95,34)

\put(96,35){\shortstack{$\gamma^{_{(2)}}$}}

\put(95,34){\circle*{2}}

\qbezier(108,73)(119,64)(124,40)

\put(91,65){\shortstack{$\gamma^{_{(2)}}_{_{-r}}$}}

\put(108,73){\circle*{2}}
\put(124,40){\circle*{2}}

%\qbezier(47,40)(49,22)(57,9)
\qbezier(44,43)(46,30)(54,17)
\put(44,43){\circle*{2}}
\put(54,17){\circle*{2}}
\put(40,4){\shortstack{$\gamma^{_{(2)}}_{_{+r}}$}}

\put(70,30){\circle{25}}

\put(63,7){\shortstack{$\gamma^{_{(1)}}_{_{+r}}$}}

\put(128,-3){\shortstack{$C_{-r}$}}

\put(83,20){\shortstack{$C_{+r}$}}

\put(-39,-50){\shortstack{Fig. 7 {\footnotesize Arcs $\gamma^{(1)}, \gamma^{(1)}_{+r}, \gamma^{(1)}_{-r}$ belong to $C, C_{+r}, C_{-r}$.}}}

\end{picture}

Consider two pieces of $\partial\Omega$: $\gamma^{(1)}$  is an arc
of the circle $C$ of radius $d$, and $\gamma^{(2)}$ is the adjacent
piece. Consider also the annulus bounded by the two concentric
circles of $C_{-r}$ of radius $(d+r)$ and $C_{+r}$ of radius $(r-d)$
(see Fig. 7). Let us consider together with the given magnetic
billiard another one acting inside the circle $C$. So the annulus
between the two concentric circles of $C_{-r}$ and $C_{+r}$ is the
phase space of magnetic billiard inside the circle $C$.

We claim that the polynomial function $F$ must have constant value
on every circle concentric with $C$. To show this we denote by
$(\rho, \phi)$ the polar coordinates centered at the center of these
circles. In these coordinates the mapping $P_-\rightarrow P_+$
corresponding to circular billiard reads:
\begin{equation}\label{rotation}
\rho(P_+)=\rho(P_-);\quad \phi(P_+)=\phi(P_-)+ \alpha(\rho),
\end{equation}
where the function $\alpha$ is
\begin{equation}\label{alpha}
\alpha(\rho)=2\arccos \left(\frac{\rho^2+d^2-r^2}{2\rho d}\right).
\end{equation}

It follows from (\ref{alpha}) that this function is analytic in the
annulus between the two circles, $r-d<\rho<d+r$. Consider now the
function
$$\Delta(x,y):=F(P_-)-F(P_+),$$
where $P_-$ has coordinates $(x,y)$ and coordinates of $P_+$ are
determined according to (\ref{rotation}) and (\ref{alpha}). It
follows from the analyticity of function $\alpha$ and polynomiality
of $F$ that the function $\Delta(x,y)$ is analytic on the open
annulus $r-d<\rho<d+r$. Furthermore, since $F$ is built via the
integral $\Phi$ for the billiard inside $\Omega$ and
$\gamma^{(1)}_{\pm r}\subset C_{\pm r}$ then $\Delta$ vanishes on an
open subset of the annulus, and therefore must vanish identically on
the annulus. This fact together with denseness of invariant circles
with irrational rotation numbers for the map (\ref{rotation}),
yields that polynomial $F$ has constant values on every concentric
circle passing inside the annulus, and therefore on every concentric
circle in the plane (not necessarily inside the annulus). This
proves the claim.

Suppose now that the adjacent piece $\gamma^{(2)}$ does not lie on
the circle $C$. This implies then that $\gamma^{(2)}_{+r}$
necessarily intersects an open set of concentric circles. On every
circle $F$ has a constant value by the claim above, and $F$ also is
a constant on $\gamma^{(2)}_{+r}$,  by Proposition \ref{prop}.
Therefore $F$ must be a constant on an open set and hence
everywhere, contrary to the assumptions. This completes the proof.
\end{proof}

Now we are in position to finish the proof of Theorem \ref{main}.
\begin{proof} Consider now the equation (\ref{rem5}) in $\mathbb{C}^2.$ It
follows from (\ref{rem5}) and Proposition \ref{prop1} that the curve
$\{f_{+r}=0\}$ has no singular points in $\mathbb{C}^2,$ since at
singular points both $H(f_{+r})$ and $\nabla (f_{+r})$ vanish.
Moreover, consider now in $\mathbb{C}P^2$ with homogeneous
coordinates $(x:y:z)$ the projective curve $\{\tilde{f}_{+r}=0\}.$
We shall denote homogeneous polynomials corresponding to $f,g$ by
$\tilde{f},\tilde{g}$ respectively. Then the homogeneous version of
(\ref{rem5}) for $(x:y:z)\in \{\tilde{f}_{+r}=0\}$ reads:
\begin{equation}\label{hom}  \tilde{g}^3\left(z\cdot
H(\tilde{f}_{+r})+\beta((\tilde{f}_{+r})_x^2+
(\tilde{f}_{+r})_y^2)^\frac{3}{2}\right)^k=const\cdot z^p.
\end{equation}
Here the power $p=3\deg g+3k(\deg f_{+r}-1)$ must be positive unless
the degree of the polynomial $f_{+r}$ and of $F$ is one. But this is
impossible, due to our convexity assumptions. Let $Z$ be any point
of intersection of $ \{\tilde{f}_{+r}=0\}$ with infinite line
$\{z=0\}.$ Then by (\ref{hom}) for such a point we have two
relations
$$
(\tilde{f}_{+r})_x^2+ (\tilde{f}_{+r})_y^2=0,\quad
x(\tilde{f}_{+r})_x+y(\tilde{f}_{+r})_y+z(\tilde{f}_{+r})_z=x(\tilde{f}_{+r})_x
+y(\tilde{f}_{+r})_y=0.
$$
But these two relations are compatible only in the two cases: either
$$x^2+y^2=z=0,\quad or \quad (\tilde{f}_{+r})_x=
(\tilde{f}_{+r})_y=0.$$ This completes the proof.
\end{proof}

\end{document}